\documentclass[11pt,a4paper,reqno]{amsart}

\usepackage[english]{babel}
\usepackage[latin1]{inputenc}
\usepackage[hidelinks]{hyperref}
\usepackage{math rsfs,
xcolor,
dsfont
}
\usepackage{amsmath}
\usepackage[english]{babel}
\usepackage{latexsym}
\usepackage{amssymb}
\usepackage{amscd}
\usepackage{amsgen,amstext,amsbsy,amsopn,amsfonts}
\usepackage{math rsfs}
\usepackage{bm,bbm}
\usepackage{amsthm,epsfig,graphicx,graphics}
\usepackage{xspace}
\usepackage{amsxtra}
\usepackage{xcolor}
\usepackage{enumerate}
\usepackage{tikz}

\usepackage{geometry}

\geometry{hmargin={2.3cm,2.3cm}}
\geometry{vmargin={5cm,3cm}}

\pagestyle{myheadings} \sloppy

\makeatletter
\renewcommand*{\@textcolor}[3]{%
  \protect\leavevmode
  \begingroup
    \color#1{#2}#3%
  \endgroup
}
\makeatother

\numberwithin{equation}{section}


\newtheorem{teo}{Theorem}[section]
\newtheorem{lem}{Lemma}[section]
\newtheorem{pro}{Proposition}[section]
\newtheorem{cor}{Corollary}[section]
\newtheorem{defi}{Definition}[section]
\theoremstyle{remark}
\newtheorem{rem}{Remark}[section]
\newcommand{\bdm}{\begin{displaymath}}
\newcommand{\edm}{\end{displaymath}}
\newcommand{\bay}{\begin{array}{c}}
\newcommand{\eay}{\end{array}}
\newcommand{\ben}{\begin{enumerate}}
\newcommand{\een}{\end{enumerate}}
\newcommand{\beq}{\begin{equation}}
\newcommand{\eeq}{\end{equation}}
\newcommand{\beqn}{\begin{eqnarray}}
\newcommand{\eeqn}{\end{eqnarray}}
\newcommand{\bml}[1]{\begin{multline} #1 \end{multline}}
\newcommand{\bmln}[1]{\begin{multline*} #1 \end{multline*}}


\renewcommand{\leq}{\leqslant}
\renewcommand{\geq}{\geqslant}

\newcommand{\tx}{\textstyle}
\newcommand{\lf}{\left}
\newcommand{\ri}{\right}
\newcommand{\R}{\mathbb{R}}
\newcommand{\C}{\mathbb{C}}
\newcommand{\M}{\mathcal{M}}

\newcommand{\DE}{\mathscr{D}}
\newcommand{\DES}{\DE_{\mathcal{S}}}
\newcommand{\nabk}{\nabla_{\kv}}

\newcommand{\Q}{\mathcal{Q}}

\newcommand{\I}{\mathcal{I}}
\newcommand{\ham}{\mathcal{H}_\alpha}
\newcommand{\SC}{\mathcal{S}(\R^2)}
\newcommand{\ep}{\varepsilon}
\newcommand{\psitr}{\widehat{\psi}}
\newcommand{\psitrs}{\widehat{\psi}_t^*}
\newcommand{\phitr}{\widehat{\phi}}
\newcommand{\ftr}{\widehat{f}}

\newcommand{\PI}{\mathrm{Im}}
\newcommand{\PR}{\mathrm{Re}}

\newcommand{\one}{\mathds{1}}
\newcommand{\palla}{{k \leq R}}
\newcommand{\bpalla}{{k = R}}
\newcommand{\h}{h_T}
\newcommand{\T}{\widetilde{T}}
\newcommand{\NN}{\mathcal{N}}
\newcommand{\B}{\mathcal{B}}

\newcommand{\zev}{\mathbf{0}}
\newcommand{\xv}{\mathbf{x}}
\newcommand{\kv}{\mathbf{k}}
\newcommand{\yv}{\mathbf{y}}
\newcommand{\f}{\frac}
\newcommand{\tf}{\tfrac}

\newcommand{\dtau}{\,\mathrm{d}\tau}
\newcommand{\ds}{\,\mathrm{d}s}
\newcommand{\dt}{\,\mathrm{d}t}
\newcommand{\dH}{\,\mathrm{d}\Sigma}
\newcommand{\dkv}{\,\mathrm{d}\kv}
\newcommand{\dxv}{\,\mathrm{d}\xv}
\newcommand{\dyv}{\,\mathrm{d}\yv}

\newcommand{\deta}{\,\mathrm{d}\eta}
\newcommand{\diff}{\mathrm{d}}


\begin{document}
 
 \title[Blow-up for 2D NLS with concentrated nonlinearity]{Blow-up for the pointwise NLS in dimension two: \\ absence of critical power}

\author[R. Adami]{Riccardo Adami}
\address{Politecnico di Torino, Dipartimento di Scienze Matematiche ``G.L. Lagrange'', Corso Duca degli Abruzzi, 24, 10129, Torino, Italy.}
\email{riccardo.adami@polito.it}
\author[R. Carlone]{Raffaele Carlone}
\address{Universit\`{a} degli Studi di Napoli ``Federico II'', Dipartimento di Matematica e Applicazioni ``R. Caccioppoli'', MSA, via Cinthia, I-80126, Napoli, Italy.}
\email{raffaele.carlone@unina.it}
\author[M. Correggi]{Michele Correggi}
\address{``Sapienza'' Universit\`{a} di Roma, Dipartimento di Matematica, P.le Aldo Moro, 5, 00185, Roma, Italy.}
\urladdr{http://www1.mat.uniroma1.it/people/correggi/}
\email{michele.correggi@gmail.com}
\author[L. Tentarelli]{Lorenzo Tentarelli}
\address{``Sapienza'' Universit\`{a} di Roma, Dipartimento di Matematica, P.le Aldo Moro, 5, 00185, Roma, Italy.}
\email{tentarelli@mat.uniroma1.it}

\date{\today}

\begin{abstract} 
 {We consider the Schr\"odinger equation in dimension two with a fixed, pointwise, focusing nonlinearity and show the occurrence of a blow-up phenomenon with two peculiar features: first, the energy threshold under which all solutions blow up is strictly negative and coincides with the infimum of the energy of the standing waves. Second, there is no critical power nonlinearity, i.e. for every power there exist blow-up solutions. This last property is uncommon among the conservative Schr\"odinger equations with local nonlinearity.}
\end{abstract}

\maketitle


\section{Introduction}

The introduction of concentrated nonlinearities for the Schr\"odinger equation dates back to the nineties of the last century  \cite{bonilla,malomed,nier}. It was motivated by the need for modeling the effect of a nonlinear centre on a quantum particle, under the assumption that the size of the centre is smaller than the wave-length of the particle. In turn, the nonlinear centre was understood as an effective description, through a suitable scaling limit, of
a cluster of a large number of particles confined in a small region of space \cite{jona}.

The issues of well-posedness and globality of solutions were investigated in \cite{AT} for the problem in one dimension and in \cite{ADFT1,ADFT2} for the one in three dimensions; while the derivation from the standard NonLinear Schr\"odinger Equation (NLSE), i.e.,  the Schr\"odinger equation with a nonlinear term of  form $f (|\psi|) \psi$ with $f$ real-valued, is due to \cite{CFNT1,CFNT}.

It turned out that the NLSE with pointwise nonlinearity shares some specific features with the standard NLSE: in particular, conservation of mass and energy holds and the globality of all solutions in the energy space is guaranteed, provided that the  nonlinearity is defocusing. A
blow-up phenomenon emerges in the case of focusing nonlinearity.

\noindent More precisely,  blow-up solutions can occur only if the growth rate at infinity of the nonlinear term is not slower than a specific power law, that defines the {\em critical power} of the problem. 

In the present paper, we show that the parallelism with the standard NLSE breaks for NLSE with pointwise nonlinearity in dimension two.

Preliminarily, let us recall that the exotic properties of the NLSE in two dimensions with pointwise nonlinearity had already emerged in the issue of the rigorous set-up of the problem \cite{CCT}.

\noindent 
More strikingly, in the present work we show that the blow-up phenomenon does not mimic its analogue for the standard NLSE under several aspects.  The most remarkable is the {\em absence of a critical power}: namely, for every superlinear power growth of the nonlinear focusing term, some solutions blow up in finite time.

\noindent
To our knowledge, this is the only known model of NLSE with a local and conservative
nonlinearity that exhibits such  behaviour, already observed, on the other hand,
for some non-conservative Schr\"odinger equations \cite{ikeda1,ikeda2,ikeda3}. Furthermore, like in the standard case, every initial datum with sufficiently low energy blows up and the energy threshold coincides with the infimum of the energy of the stationary solutions; but, contrarily to the standard case, such a threshold turns out to be strictly negative and finite for every nonlinearity power. 

In the present paper we prove all these facts by using the classical virial method due to Glassey \cite{Glassey}.  We show that, if a solution lies below a given energy threshold, then its moment of inertia is strictly concave and this prevents the solution from existing for an arbitrarily large time.

\noindent Of course, the energy threshold gives a sufficient condition only and does not provide information either on the shape of blow-up solutions or on the blow-up time rate. We do not see any reason for these to be similar to those discovered for the standard NLS \cite{perelman,merle1,merle2,merle3}, and we plan to further investigate this problem in a future work. 

We recall that a thorough analysis of the blow-up  for a one-dimensional NLSE with concentrated nonlinearity has been carried out in \cite{holmer1,holmer2}, while the interplay between standard nonlinearity and linear delta potential has been studied in \cite{banica} in the scattering context.

\noindent Finally, we mention that recently the issue of the pointwise nonlinearities has been also discussed in the context of  quantum beating  \cite{carlone}, in that of the fractional Schr\"odinger equation \cite{fract} and in that of the Dirac equation \cite{CCNP}. On the other hand, some recent works manage more general singular problems, such as \cite{G-CV, OP, R}.


\subsection{Organization of the paper}

The paper is organized as follows:

\begin{itemize}
 \item[(i)] in Section \ref{sec-set} we introduce the model of the point interactions in dimension two, fix the functional setting of the problem (i.e., \eqref{eq:NLSEweak}) and present the main results of the paper:
 \begin{itemize}
  \item[-] Theorem \ref{teo:sufficient}, which provides a sufficient condition for the existence of blow-up solutions,
  \item[-] Theorem \ref{teo-stand}, which classifies all the standing waves of the problem;
 \end{itemize}
 \item[(ii)] in Section \ref{sec:standing} we show the proof of Theorem \ref{teo-stand};
 \item[(iii)] in Section \ref{sec:prel} we introduce the moment of inertia $ \M(t)$ and present a heuristic justification of the formulae for its first (Proposition \ref{pro:first_der}) and second (Proposition \ref{pro:second_der}) derivatives leading to the proof of Theorem \ref{teo:sufficient};
 \item[(iv)] in Section \ref{app} we exhibit a rigorous proof of the formulae for the first and second derivatives of the moment of inertia.
\end{itemize}

\bigskip
\bigskip
\noindent
{\bf Acknowledgements.} R.C., M.C. and L.T. acknowledge the support of MIUR through the FIR grant 2013 ``Condensed Matter in Mathematical Physics (Cond-Math)'' (code RBFR13WAET).
R.A. acknowledges the project PRIN 2015 ``Variational methods with applications to problems in mathematical physics and geometry'', funded by MIUR.
\bigskip


\section{Setting and main results}
\label{sec-set}

The evolution problem we aim at studying can be formally introduced as:
\begin{equation}
 \label{eq:NLSE}
 \left\{
 \begin{array}{l}
  \displaystyle i\partial_t \psi_t= - \Delta \psi_t - \beta | \psi_t |^{2 \sigma} \delta_0 \psi_t\\[.2cm]
  \displaystyle \psi_{t=0}=\psi_0,
 \end{array}
 \right.
\end{equation}
where the nonlinearity power $\sigma$ is positive and $\delta_0$ is a Dirac's delta potential centred at the origin of the two-dimensional space. Notice that the nonlinearity is embodied in the coupling of the delta potential, while the focusing character results from imposing $\beta > 0$. Let us point out that in \cite{CCT} both attractive and repulsive delta potentials are considered and the related term is written as $+\beta | \psi_t |^{2 \sigma} \delta_0 \psi_t$, with $\beta$ of either sign.

As it is well-known (see e.g. \cite{Al,CCF}), in eq. \eqref{eq:NLSE} the delta term cannot be considered as a perturbation, since the Laplacian cannot control a delta potential. Nevertheless, as shown in \cite{CCT}, it is possible to define a pointwise nonlinear interaction in the following way.


\subsection{From linear to nonlinear point interactions}

The issue of a rigorous definition of point interactions  arises even in the linear case (i.e., when $\sigma=0$ in \eqref{eq:NLSE}).

\noindent A standard approach \cite{Al} considers the classifications of  all the nontrivial self-adjoint extensions of the hermitian operator $-\Delta_{|C_0^\infty(\R^2\backslash\{\zev\})}$. It turns out that there exists only a one-parameter family of self adjoint extensions, denoted by $\ham$, such that: for all $\alpha\in\R$, $\ham:L^2(\R^2)\to L^2(\R^2)$ has  domain
\begin{multline}
 \label{eq-domain}
\mathrm{dom}(\ham):=\bigg\{\psi\in L^2(\R^2): 
\phi_\lambda:=\psi-qG^{\lambda}\in H^2(\R^2),\,\,q\in\C,\,\,\:\phi_\lambda(\zev)=\bigg(\alpha+\tf{\log\f{\sqrt{\lambda}}{2}+\gamma}{2\pi}\bigg)q\bigg\}
\end{multline}
and action
\begin{equation}
 \label{eq-action}
 (\ham+\lambda)\psi:=(-\Delta+\lambda)\phi_\lambda,\qquad\forall\psi\in\mathrm{dom}(\ham),
\end{equation}
where $\lambda>0$, $\gamma$ is the Euler-Mascheroni constant and $G^{\lambda}$ is the Green's function of $-\Delta+\lambda$ in dimension two, namely
\[
G_\lambda (\xv) \ = \ \tx\f 1 {2 \pi} K_0\big(\sqrt \lambda |\xv|\big),
\]
with $K_0$ the MacDonald function of order zero (see, e.g. \cite{AS}). Note that the definition of $\ham$ is independent of $\lambda$ since, if  $\phi_\lambda\in H^2(\R^2)$, then $\phi_{\lambda'}\in H^2(\R^2)$ for every other $\lambda'>0$, given that $G^\lambda-G^{\lambda'}\in H^2(\R^2)$. As a consequence, $\lambda$ is just a mute parameter connected with the several possible representations of a function of $\mathrm{dom}(\ham)$, which plays no relevant role in the following.

Self-adjointness of $\ham$ entails, via  Stone theorem, that for every $\psi_0\in\mathrm{dom}(\ham)$, there exists a unique function
\begin{equation}
 \label{eq-reglin}
 \psi\in C\big([0,T];\mathrm{dom}(\ham)\big)\cap C^1\big([0,T];L^2(\R^2)\big),\quad\forall T>0,
\end{equation}
which solves
\begin{equation}
 \label{eq:LSE}
 \left\{
 \begin{array}{l}
  \displaystyle i\partial_t \psi_t= \ham\psi_t\\[.2cm]
  \displaystyle \psi_{t=0}=\psi_0.
 \end{array}
 \right.
\end{equation}
Note that \eqref{eq-reglin}, according to the definition of $\mathrm{dom}(\ham)$, implies that at every time $t$ the solution decomposes as
\begin{equation}
 \label{eq-dec}
 \psi_t=\phi_{\lambda,t}+q(t)G^\lambda
\end{equation}
with $q(t)\in\C$, $\phi_{\lambda,t}\in H^2(\R^2)$ and
\begin{equation}
 \label{eq-bc}
 \phi_{\lambda,t}(\zev)=\bigg(\alpha+\frac{\log\f{\sqrt{\lambda}}{2}+\gamma}{2\pi}\bigg)q(t).
\end{equation}
On the other hand, it is well-known that, denoting by $U_0(t)=e^{i t \Delta}$ the free Schr\"odinger propagator, with kernel $U_0(t;|\xv|) : = \frac{e^{-\frac{|\xv|^{2}}{4 i t}}}{2 i t}$ (acting by the normalized convolution product
$\lf(f * g \ri)(\xv):=\frac{1}{2\pi}\int_{\R^2}\dyv\:f(\xv-\yv)g(\yv)$)
and by $\I$ the {Volterra function} of order $-1$ \cite{CCT}, i.e.
\[
 \I(t) : = \int_{0}^{\infty}\dtau \: \frac{t^{\tau - 1}}{\Gamma(\tau)},
\]
($\Gamma$ is the Euler function), the solution $\psi_t$ can be written as
\begin{equation}
  \label{eq:ansatz}
  \psi_t (\xv) := (U_0(t)\psi_0) (\xv)+\frac{i}{2\pi}\int_0^t \dtau\: U_0(t-\tau; |\xv|) \: q(\tau),
 \end{equation}
where $q$ is the unique solution of the so-called \emph{charge equation}
\begin{equation}
 \label{eq:charge_lin}
  q(t)+4 \pi \int_0^t\dtau\:\I(t-\tau)\bigg(\alpha+\f{\gamma-\log2}{2\pi} - \f i 8 \bigg)q(\tau)=4\pi\int_0^t\dtau\:\I(t-\tau)(U_0(\tau)\psi_0)(\zev).
 \end{equation}

Then, nonlinear point interactions arise as one assumes that the strength of the point interaction $\alpha$ depends on the wave functions. Specifically, one sets $\alpha=-\beta|q(t)|^{2\sigma}$, so that \eqref{eq:charge_lin} reads
\begin{equation}
 \label{eq:charge}
 q(t)+4 \pi \int_0^t\dtau\:\I(t-\tau)\bigg(\theta_1(|q(\tau)|) - \f i 8 \bigg)q(\tau)
 \\[.2cm]=4\pi\int_0^t\dtau\:\I(t-\tau)(U_0(\tau)\psi_0)(\zev).
\end{equation}
with
\beq\label{eq-theta}
	\theta_\lambda (s) := 
\left(\tfrac{1}{2\pi}\log\tfrac{\sqrt{\lambda}}{2}+\tfrac{\gamma}{2\pi} -  \beta s^{2 \sigma} \right),\qquad \lambda>0,\quad s\geq0.
\eeq 
\noindent Hence the evolution problem combines  \eqref{eq:charge}, whose solution is called \emph{charge}, and \eqref{eq:ansatz}, which is expected to solves the nonlinear version of \eqref{eq:LSE}, namely
\begin{equation}
 \label{eq:NLSEstrong}
 \left\{
 \begin{array}{l}
  \displaystyle i\partial_t \psi_t= \mathcal{H}_{\alpha=-\beta|q|^{2\sigma}}\psi_t\\[.2cm]
  \displaystyle \psi_{t=0}=\psi_0,
 \end{array}
 \right.
\end{equation}
where $\mathcal{H}_{\alpha=-\beta|q|^{2\sigma}}$ is the nonlinear map that arises letting $\alpha=-\beta|q|^{2\sigma}$ in \eqref{eq-domain}-\eqref{eq-action}.


\subsection{Setting of the problem and previous results}

As explained in \cite{CCT}, the proof of the well-posedness of \eqref{eq:NLSEstrong} in a strong sense is still open at the moment,  since a  proof of a sufficient regularity for the solution of \eqref{eq:charge} is lacking.

\noindent Nevertheless,  a weak version of \eqref{eq:NLSEstrong} has been proved to be well-posed in \cite{CCT}.  Indeed, the weak form of \eqref{eq:LSE} is given by
\begin{equation}
 \label{eq-LSEweak}
 \left\{
 \begin{array}{ll}
 \displaystyle i\tfrac{d}{dt}\langle\chi,\psi_t\rangle=\mathcal{F}_\alpha[\psi_t,\chi], & \forall\chi\in V,\\[.2cm] 
 \displaystyle \psi_{t=0}=\psi_0, & 
 \end{array}
\right.
\end{equation}
where $\langle\cdot,\cdot\rangle$ is the ordinary hermitian product in $L^2 (\R^2)$ and, $\mathcal{F}_\alpha$ is the sequilinear form associated with $\ham$, i.e.
\[
\mathcal{F}_{\alpha}(\psi):=\|\nabla\phi_{\lambda}\|_{L^2(\R^2)}^2+\lambda\big(\|\phi_{\lambda}\|_{L^2(\R^2)}^2-\|\psi\|_{L^2(\R^2)}^2\big)+\bigg(\alpha+\f{\log\f{\sqrt{\lambda}}{2}+\gamma}{2\pi}\bigg)|q|^2,
\]
with domain
\begin{equation}
 \label{eq:form_dom}
 V := \big\{\psi\in L^2(\R^2):\phi_\lambda:=\psi-qG^{\lambda}\in H^1(\R^2),\,\,q\in\C\big\}.
\end{equation}
Note that the form domain $V$ induces the same decomposition of $\mathrm{dom}(\ham)$ (i.e., \eqref{eq-dec}), but presents a weaker regularity requirement on $\phi_\lambda$ ($H^1(\R^2)$ in place of $H^2(\R^2)$) and, moreover, no {boundary condition} as \eqref{eq-bc} is imposed. Note also that, as well as for $\ham$, $\mathcal{F}_\alpha$ and $V$ do not depend on the parameter $\lambda$.

\noindent As a consequence, the nonlinear version of \eqref{eq-LSEweak} is obtained by setting  $\alpha=-\beta|q|^{2\sigma}$ in \eqref{eq-LSEweak}, thus yielding

\begin{equation}
 \label{eq:NLSEweak}
 \left\{
 \begin{array}{ll}
 \displaystyle i\tfrac{d}{dt}\langle\chi,\psi_t\rangle=\langle\nabla\chi_\lambda,\nabla\phi_{\lambda,t} \rangle+\lambda \langle\chi_\lambda,\phi_{\lambda,t}\rangle-\lambda \langle\chi,\psi_t\rangle+ \theta_\lambda ( |q(t)| ) q_\chi^* q(t), & \forall\chi\in V,\\[.2cm] 
 \displaystyle \psi_{t=0}=\psi_0, &
 \end{array}
 \right.
\end{equation}
\noindent Concerning \eqref{eq:NLSEweak}, \cite{CCT} shows the following facts:

\begin{enumerate}[(i)]
\item if $\sigma \geq \f 1 2$, then for every $\psi_0 \in \DE$, where
\[
 \DE := \big\{ \psi \in V :(1 + k^{\ep}) \: \phitr_\lambda(\kv) \in L^1(\R^2), \mbox{ for some } \ep > 0 \big\}
\]
($k:=|\kv|$), there exists a unique solution $\psi_t \in V$ to problem \eqref{eq:NLSEweak}, for $t \in [0,T)$  \cite[Theorem 1.1]{CCT}; furthermore, $\psi_t$ is of the form \eqref{eq:ansatz} with $q(\cdot)$ the unique solution in $C[0,T]\cap H^{1/2}(0,T)$ of \eqref{eq:charge} \cite[Propositions 2.2, 2.3 \& 2.4]{CCT};

\item for the solution $\psi_t$, the following identities hold for all $t \in [0,T)$ \cite[Theorem 1.2]{CCT}:
\begin{enumerate}[(a)]
\item conservation of mass:
 $$ M(\psi_t):=\left\| \psi_t\right\|_{L^2(\R^2)} = M (\psi_0); $$ 
\item conservation of energy:
 \bml{
  \label{eq:energy}
  E(\psi_t) :=  \, \| \nabla \phi_{\lambda,t} 
\|_{L^2 (\R^2, \C^2)}^2 + \lambda \big(\| \phi_{\lambda,t} \|_{L^2 (\R^2)}^2 - 
\| \psi_t \|_{L^2 (\R)}^2\big) +\\+ \theta_\lambda (|q(t)|)|q(t)|^2 + \f {\sigma \beta}
 {\sigma + 1} |q(t)|^{2 \sigma + 2} = E (\psi_0).}
\end{enumerate}
\end{enumerate}

\begin{rem}[Energy expression]
	\mbox{}	\\
The expression of energy  differs from the one used in \cite{CCT} in two respects: first, here it is stated for a generic $\lambda > 0$, while in \cite{CCT} it is given for $\lambda = 1$. However, this does not actually affects the value of the energy, which independent of $\lambda$. Second, for the sake of obtaining a shorthand expression in \cite{CCT} the term $M^2 (\psi_t)$ is added to the energy,
which does not affect the conservation law. In this paper, we prefer the expression \eqref{eq:energy} since it gives a more straightforward energy threshold for the blow-up.
\end{rem}


\subsection{Main results}

In the results previously recalled, nothing is said about the possibility of extending the local solution to arbitrarily large times (this is guaranteed, indeed, by \cite[Theorem 1.3]{CCT} only for the \emph{defocusing} case, i.e., $\beta<0$). In fact, we will show that this is not always the case: however small the nonlinearity power $\sigma$ is, there always exist initial data for which the solution cannot be extended beyond a certain finite time. Let us first give a basic definition:

\begin{defi}[Blow-up solutions]
\mbox{}	\\
A solution $\psi_t$ to problem \eqref{eq:NLSEweak} is said to {\em blow up in finite time} and is called a {\em blow-up solution}, if there exists $T_* > 0$ such that
$$ \limsup_{t \to T_*} | q (t) | = + \infty.$$\end{defi}

\begin{rem}[Blow-up of the regular part]
\mbox{}	\\
Note that a pointwise blow-up of the charge $ q(t) $ at $ T_{*} $ implies the explosion at the same time of the $ H^1 $ norm of the regular part $ \phi_{\lambda,t} $, due to the energy conservation \eqref{eq:energy}. On the other hand, since also the mass is preserved, whenever $ q(t) $ blows up at $ T_{*} $, the $L^2$ norm of the regular part has to blow up as well. \end{rem}

Also, recall that, according to \cite[Proposition 1.1]{CCT}, a {\em blow-up alternative} holds, namely a solution to \eqref{eq:NLSEweak} cannot be extended to a global one if and only if it blows up in finite time.

\medskip
The main result of the present paper is the following:

\begin{teo}[Sufficient condition for blow-up] \label{teo:sufficient} 
	\mbox{}	\\
	Let $\sigma \geq 1/2$ and let the initial datum 
	\begin{equation}
 		\label{eq:schwarz_domain}
 		\psi_0 \in \DES:=\lf\{\psi\in\DE:\phi_\lambda\in\SC \ri\}
	\end{equation}
	($\SC$ denoting the space of Schwartz functions) satisfy the energy condition
\begin{equation}
  \label{eq:blow_cond}
  E(\psi_0) < \Lambda:=- \frac \sigma {4 \pi (\sigma + 1)
(4 \pi \sigma \beta)^{\f 1 \sigma}}.
\end{equation}
 Then, the solution $\psi_t$ to \eqref{eq:NLSEweak} blows up in finite time.
\end{teo}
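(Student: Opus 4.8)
The plan is to adapt Glassey's virial argument to the present singular setting, showing that the moment of inertia
\[
I(t):=\int_{\R^2}|\xv|^2\,|\psi_t(\xv)|^2\dxv
\]
is strictly concave as soon as $E(\psi_0)$ lies below the stated threshold. First I would check that $I(t)$ is well defined and of class $C^2$ on $[0,T)$ for data in $\DES$. Finiteness is not an issue for the singular part, since $|\xv|^2|G_\lambda|^2$ is integrable: $G_\lambda$ diverges only logarithmically at the origin and decays exponentially at infinity, being a multiple of $K_0(\sqrt\lambda|\xv|)$. For the regular part one has to propagate the weighted bound $|\xv|\,\phi_{\lambda,t}\in L^2$, exploiting the Schwartz regularity of $\phi_\lambda$ at $t=0$ together with the representation \eqref{eq:ansatz} and the regularity $q\in C[0,T]\cap H^{1/2}(0,T)$; the hypothesis $\sigma\ge 1/2$ enters here only through the results recalled in item (i), which guarantee that a solution exists to begin with.

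Second, I would derive the first virial identity
\[
\dot I(t)=4\,\PI\!\int_{\R^2}\overline{\psi_t}\,\xv\cdot\nabla\psi_t\dxv .
\]
This follows from the continuity equation: writing $\partial_t|\psi_t|^2=2\PR(\overline{\psi_t}\,\partial_t\psi_t)$ and inserting the equation, the focusing term has the form $\PR(i\times\text{real})=0$ and drops out — moreover it is supported at $\zev$, where the weight $|\xv|^2$ annihilates it — while the singular boundary term generated by integrating $|\xv|^2\overline{\psi_t}\Delta\psi_t$ by parts on $\R^2\setminus B_\ep(\zev)$ is $O(\ep^2\log\ep)$ and vanishes as $\ep\to0$.

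The heart of the proof is the second virial identity, which I expect to read
\[
\ddot I(t)=16\,E(\psi_t)+\tfrac{4}{\pi}\,|q(t)|^2-\tfrac{16\sigma\beta}{\sigma+1}\,|q(t)|^{2\sigma+2}.
\]
I would obtain it by differentiating $\dot I$ once more, substituting the equation, and integrating by parts on $\R^2\setminus B_\ep(\zev)$; now the boundary terms at $|\xv|=\ep$ do \emph{not} all vanish, and, using the short-range expansion $\psi_t(\xv)=-\tfrac{q(t)}{2\pi}\log|\xv|+(\text{regular value})+o(1)$ together with the charge relation, they converge as $\ep\to0$ to the finite renormalized quantities $\tfrac4\pi|q|^2$ and the nonlinear term. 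A clean way to guess and cross-check the coefficients is the mass-preserving dilation $\psi^\mu(\xv):=\mu\,\psi(\mu\xv)$: rewriting $\psi^\mu$ with reference parameter $\mu^2\lambda$ (using $G_{\mu^2\lambda}(\xv)=G_\lambda(\mu\xv)$), a direct substitution in \eqref{eq:energy} gives
\[
E(\psi^\mu)=\mu^2\Big(E(\psi)+\tfrac{\beta}{\sigma+1}|q|^{2\sigma+2}\Big)+\tfrac{\mu^2\log\mu}{2\pi}|q|^2-\tfrac{\beta}{\sigma+1}\mu^{2\sigma+2}|q|^{2\sigma+2},
\]
and the virial-type relation $\ddot I=8\,\tfrac{d}{d\mu}E(\psi^\mu)\big|_{\mu=1}$ reproduces exactly the displayed expression. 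The main obstacle is precisely making this step rigorous: because $G_\lambda\notin H^1(\R^2)$, the naive manipulations involve divergent integrals, and one must justify the differentiation under the integral sign, the cancellation of the genuinely divergent boundary contributions, and the survival of the finite ones, relying on the fractional time regularity of $q$.

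Finally I would close the argument by optimizing the charge-dependent part. The map $y\mapsto\tfrac4\pi y^2-\tfrac{16\sigma\beta}{\sigma+1}y^{2\sigma+2}$ attains its maximum $\tfrac{4\sigma}{\pi(\sigma+1)}(4\pi\sigma\beta)^{-1/\sigma}$ at $y_*=(4\pi\sigma\beta)^{-1/(2\sigma)}$, and, crucially, this maximum is finite and positive for \emph{every} $\sigma>0$, because $2\sigma+2>2$ for all powers: this is the structural reason behind the absence of a critical power, as the quadratic $|q|^2$ term produced by the anomalous two-dimensional scaling is always defeated by the nonlinear term at large charge. By conservation of energy $E(\psi_t)=E(\psi_0)$, so that
\[
\ddot I(t)\le 16\Big(E(\psi_0)+\tfrac{\sigma}{4\pi(\sigma+1)(4\pi\sigma\beta)^{1/\sigma}}\Big)=:-C,
\]
with $C>0$ exactly when the energy condition \eqref{eq:blow_cond} holds. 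Then $I(t)\le I(0)+\dot I(0)\,t-\tfrac C2 t^2$ would become negative in finite time, contradicting $I(t)\ge 0$; hence the maximal existence time $T$ is finite, and the blow-up alternative of \cite{CCT} forces $\limsup_{t\to T}|q(t)|=+\infty$.
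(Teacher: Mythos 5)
Your overall strategy --- Glassey's virial argument, with the blow-up threshold emerging from maximizing the charge-dependent part of the second derivative of the moment of inertia over $|q|$ --- is exactly the paper's, and your structural explanation for the absence of a critical power (the quadratic $|q|^2$ term produced by the two-dimensional anomalous scaling is always beaten by $|q|^{2\sigma+2}$, whatever $\sigma>0$) matches the paper's own remark. However, there are two genuine problems.

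First, and most seriously, the step you yourself flag as ``the main obstacle'' --- the rigorous derivation of the virial identities --- is precisely where the entire technical content of the paper lies, and the tool you propose for it is insufficient. You plan to justify the differentiation under the integral sign and the cancellation of divergent contributions ``relying on the fractional time regularity of $q$'', i.e.\ on $q\in C[0,T]\cap H^{1/2}(0,T)$ coming from the well-posedness theory. The paper states explicitly, at the opening of Section \ref{sec:charge}, that this regularity (equivalently, the log-H\"older continuity of $q$) is \emph{not} enough, and devotes the whole of Section \ref{sec:charge} to upgrading it: Proposition \ref{pro:reg_q} proves $q\in W^{1,1}(0,T)$ for every $T<T_*$, via a new contraction argument in $W^{1,1}$, a Lipschitz estimate for $z\mapsto |z|^{2\sigma}z$ in $W^{1,1}$, and mapping properties of the Volterra kernel $\I$. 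Every delicate limiting step in the computation of $\dot\M$ and $\ddot\M$ --- the removal of the cut-off, the control of terms like $\int_0^s\dtau\, e^{-ik^2(s-\tau)}(s-\tau)\dot q(\tau)$, and the Riemann--Lebesgue-type argument of Lemma \ref{bv}, which needs $q$ of bounded variation --- hinges on $\dot q\in L^1$. Without this missing lemma your plan cannot be completed. Moreover, your position-space route (excising $B_\ep(\zev)$ and tracking boundary terms) starts from inserting the equation into $\partial_t|\psi_t|^2$, which is itself formal here: the equation holds only in the weak sense \eqref{eq:NLSEweak} and $\psi_t\notin H^1(\R^2)$; this is why the paper instead works in Fourier space from the explicit representation of $\widehat\psi_t$ in terms of the free propagator and the charge.

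Second, your second virial identity is off by a factor $2$. The heuristic you use to fix the coefficients, $\ddot I = 8\,\frac{d}{d\mu}E(\psi^\mu)\big|_{\mu=1}$, should be $\ddot I = 4\,\frac{d}{d\mu}E(\psi^\mu)\big|_{\mu=1}$: check against the free evolution, where $\ddot I = 8\|\nabla u\|_{L^2}^2 = 8E$, whereas your formula would give $16E$. Your scaling computation of $E(\psi^\mu)$ is correct, and with the prefactor $4$ it reproduces exactly the paper's identity \eqref{eq:second_der}, namely $\ddot\M = 8E(0)+\frac{2}{\pi}|q|^2-\frac{8\sigma\beta}{\sigma+1}|q|^{2\sigma+2}$, i.e.\ half of what you display. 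By luck the error is a uniform factor: it doubles both the energy term and the maximum $\frac{4\sigma}{\pi(\sigma+1)}(4\pi\sigma\beta)^{-1/\sigma}$ of the charge part, so it cancels in the ratio and your final condition coincides with \eqref{eq:blow_cond}. Still, the displayed identity is false as stated, and had the discrepancy not been a global factor, the threshold itself would have come out wrong.
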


As already stressed, the most relevant difference with respect to the 1D and the 3D cases is the lack of a {critical power}, namely the existence of a minimum value of $\sigma$ in order to have blow-up solutions: for both the one and the three-dimensional case such a value equals one \cite{AT,ADFT2}. Notice that the assumption in Theorem \ref{teo:sufficient} about $\sigma$ is merely technical, as it guarantees local well-posedness, and is not related to criticality \cite[Remark 1.4]{CCT}. The restriction on the choice of the initial data, on the other hand, could be weakened; nevertheless we chose to keep it not to make computations too burdensome.

The proof of Theorem \ref{teo:sufficient} is quite immediate, provided that one knows the qualitative behavior of the so-called \emph{moment of inertia} associated with a solution $\psi_t$, i.e.
\begin{equation}
 \label{eq:inertia}
\M(t):=\int_{\R^2}\dxv\:|\xv|^2\left|\psi_t(\xv)\right|^2.
\end{equation}
The discussion of the behavior of $\M$ is the main technical problem of the present paper and will be extensively addressed in the following sections. However, once the behavior of $\M$ is understood (Corollary \ref{cor-entails}), the proof of Theorem \ref{teo:sufficient} is immediate:

\begin{proof}[Proof of Theorem \ref{teo:sufficient}]
From Corollary \ref{cor-entails},
\begin{equation}
\label{eq-mombound}
 \ddot \M (t) \leq 8 (E (\psi_0) - \Lambda),\qquad\forall t\in[0,T_*).
\end{equation}
Hence, hypothesis \eqref{eq:blow_cond} entails that the moment of inertia is uniformly concave and this would contradict the positivity of $\M$ unless $T_*<+\infty$. Then, the blow-up alternative implies that $\psi_t$ blows up in a finite time.
\end{proof}

The threshold $\Lambda$ has an interesting connection with the energy of the \emph{standing waves} of the problem, that is

\begin{defi}[Standing waves]\label{de-standing}
\mbox{}	\\
A nontrivial solution $\psi^\omega (t,x)$ to \eqref{eq:NLSEweak} of the form
\begin{equation}
\psi^\omega (t,\xv) \, = \, e^{i \omega t} u^{\omega} (\xv) \label{standing}
\end{equation}
is said a {\em standing wave} of \eqref{eq:NLSEweak}.
\end{defi}

Precisely, it is possible to completely classify the standing waves of \eqref{eq:NLSEweak} and see that the infimum of their energies equals $\Lambda$.

\begin{teo} [Standing waves]\label{teo-stand}
\mbox{}	\\
Every standing wave of \eqref{eq:NLSEweak} is given by
\begin{equation} 
u^\omega (\xv) = Q(\omega) e^{i\eta} G_\omega (\xv) \label{uomega},
\end{equation}
where $ \eta \in \R $ is a constant,
\begin{equation}
 Q (\omega) := \left( \frac{\log \frac {\sqrt \omega} 2 + \gamma}{2 \pi \beta} \right)^{\frac 1 {2 \sigma}},
\label{quomega}
\end{equation}
and $\omega \in (4 e^{-2 \gamma}, + \infty)$. In addition,
\begin{equation}
 \label{infenergy}
 \min_{\omega \in (4 e^{- 2 \gamma}, + \infty)} E (\psi^\omega )=\Lambda<0.
\end{equation}
\end{teo}

\begin{rem}[Energy threshold]
\mbox{}	\\
 Given the coincidence of the energy threshold in \eqref{eq:blow_cond} for the blow-up with the lowest energy of standing waves in \eqref{infenergy}, it is intuitive that the uncommon features of the blow-up, for the pointwise NLS in dimension two, has to be connected to uncommon features of the standing waves. Hence, a deeper analysis of the behavior of $\{u^\omega\}$ will be the subject of our future investigation (and will be presented in a forthcoming paper).
\end{rem}


\section{Standing Waves}
\label{sec:standing}

This section presents the discussion on the standing waves of \eqref{eq:NLSEweak}, introduced by Definition \ref{de-standing}. In what follows we will refer both to $\psi^\omega$ and to $u^\omega$ as to a standing wave (since it does not give rise to misunderstandings).

Preliminarily, note that, in view of \eqref{eq:form_dom} (i.e., the definition of the space $V$), given $\lambda > 0$, one has the decomposition
\begin{equation}
u^\omega = \phi^\omega_\lambda + q^\omega G_\lambda \label{standec},
\end{equation}
with $\phi^\omega_\lambda \in H^1 (\R^2)$. Then, we can show, as a first step, that the frequency $\omega$ must be positive.

\begin{pro}[Standing wave frequency]
\mbox{}	\\
Let $\psi^\omega$ be a standing wave of \eqref{eq:NLSEweak}. Then, $\omega > 0$.
\end{pro}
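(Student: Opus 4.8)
The plan is to substitute the standing-wave ansatz \eqref{standing}--\eqref{standec} into the weak formulation \eqref{eq:NLSEweak} and to reduce the positivity of $\omega$ to an $L^2$-integrability obstruction. First I observe that for a standing wave the charge is $q(t)=e^{i\omega t}q^\omega$, so that $|q(t)|\equiv|q^\omega|$ is constant and $\theta_\lambda(|q(t)|)$ does not depend on time. Inserting $\psi_t=e^{i\omega t}u^\omega$ and $\phi_{\lambda,t}=e^{i\omega t}\phi^\omega_\lambda$ into \eqref{eq:NLSEweak}, the factor $e^{i\omega t}$ cancels on both sides (the left-hand side produces $-\omega e^{i\omega t}\langle\chi,u^\omega\rangle$), and one is left with a stationary weak identity valid for every $\chi\in V$.

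The key step is to test only against $\chi\in H^1(\R^2)\subset V$, i.e. with $q_\chi=0$: this annihilates the nonlinear term $\theta_\lambda(|q^\omega|)q_\chi^*q^\omega$, so that the nonlinearity plays no role in what follows. Writing $\langle\nabla\chi,\nabla\phi^\omega_\lambda\rangle+\lambda\langle\chi,\phi^\omega_\lambda\rangle=\langle\chi,(-\Delta+\lambda)\phi^\omega_\lambda\rangle$ and using $u^\omega=\phi^\omega_\lambda+q^\omega G_\lambda$, the stationary identity simplifies, after splitting $(-\Delta+\lambda)=(-\Delta+\omega)+(\lambda-\omega)$, to
$$(\lambda-\omega)\,q^\omega\,\langle\chi,G_\lambda\rangle=\langle\chi,(-\Delta+\omega)\phi^\omega_\lambda\rangle,\qquad\forall\,\chi\in H^1(\R^2).$$
By density this yields the distributional identity $(-\Delta+\omega)\phi^\omega_\lambda=(\lambda-\omega)q^\omega G_\lambda$, whose right-hand side is a genuine $L^2$ function.

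Taking Fourier transforms and using $\widehat{G_\lambda}(\kv)=\tfrac{1}{2\pi(k^2+\lambda)}$ gives, away from the zero set of $k^2+\omega$,
$$\widehat{\phi^\omega_\lambda}(\kv)=\frac{(\lambda-\omega)\,q^\omega}{2\pi(k^2+\lambda)(k^2+\omega)},$$
and a short computation shows $\widehat{u^\omega}=\widehat{\phi^\omega_\lambda}+q^\omega\widehat{G_\lambda}=\tfrac{q^\omega}{2\pi(k^2+\omega)}$, i.e. formally $u^\omega=q^\omega G_\omega$. I would then conclude by imposing $\phi^\omega_\lambda\in H^1(\R^2)$, hence $\widehat{\phi^\omega_\lambda}\in L^2(\R^2)$. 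If $\omega<0$ the factor $(k^2+\omega)^{-1}$ has a simple pole along the circle $k=\sqrt{-\omega}$, which is non-square-integrable in two dimensions; if $\omega=0$ the singularity $\widehat{\phi^\omega_\lambda}\sim q^\omega/(2\pi k^2)$ at the origin again fails to be in $L^2(\R^2)$. Since a nontrivial standing wave forces $q^\omega\neq0$ (otherwise the same distributional equation gives $\phi^\omega_\lambda\equiv0$, hence $u^\omega\equiv0$), both cases are excluded and $\omega>0$.

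The only genuinely delicate point is the integrability analysis in the last step: one must check that the pole of $(k^2+\omega)^{-1}$ on the sphere $\{k^2=-\omega\}$ (for $\omega<0$) and at the origin (for $\omega=0$) is strong enough to destroy membership in $L^2(\R^2)$, which is precisely where the two-dimensional nature of the problem enters. Everything else is a routine manipulation of the weak formulation, and it is worth stressing that $\theta_\lambda$ drops out entirely, so the positivity of $\omega$ is in fact a linear feature of the underlying point interaction.
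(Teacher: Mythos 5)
Your proposal is correct and follows essentially the same route as the paper's proof: test the stationary weak identity against $\chi\in H^1(\R^2)$ to kill the nonlinear term, pass to the distributional equation $(-\Delta+\omega)\phi^\omega_\lambda=(\lambda-\omega)q^\omega G_\lambda$, and use the failure of $L^2$-integrability of $\widehat{\phi^\omega_\lambda}$ in Fourier variables for $\omega\leq 0$ to force $q^\omega=0$ and hence $u^\omega\equiv 0$. The only cosmetic difference is your extra observation that $\widehat{u^\omega}=q^\omega/(2\pi(k^2+\omega))$, which anticipates the explicit form of the standing waves proved in the following theorem but is not needed for the positivity of $\omega$.
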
 

\begin{proof}
By \eqref{standing}, for any $\lambda > 0$, equation \eqref{eq:NLSEweak} gives
\begin{equation} \label{standeq}
(\lambda - \omega) \langle \chi, u^\omega \rangle \ = \ 
\langle \nabla \chi_\lambda, \nabla \phi^\omega_\lambda \rangle + \lambda 
\langle \chi_\lambda,  \phi^\omega_\lambda \rangle + \theta_\lambda \lf(\lf|q^\omega\ri|\ri) q^*_\chi q^\omega.
\end{equation}
Choosing $\chi \in H^1 (\R^2)$ one has $\chi = \chi_\lambda$ and $q_\chi = 0$, so that, using \eqref{standec},
$$
 - \omega \langle \chi, \phi_\lambda^\omega \rangle +
(\lambda - \omega) q^\omega \langle \chi, G_\lambda \rangle
\ = \ 
\langle \nabla \chi, \nabla \phi^\omega_\lambda \rangle. 
$$
Then, by density of $H^1 (\R^2)$ in $L^2 (\R^2)$, 
$$ - \Delta \phi^\omega_\lambda \ =  - \omega \phi^\omega_\lambda + (\lambda - \omega)
q^\omega G_\lambda,
$$
and, finally, in the Fourier space one gets
\[
(k^2 + \omega) \widehat \phi^\omega_\lambda \ = \ \frac {(\lambda - \omega) q^{\omega} }
{2 \pi (k^2 + \lambda)}.
\]
If $\omega > 0$, then
$$
\widehat \phi^\omega_\lambda \ = \ \frac { (\lambda - \omega) q^\omega}
{2 \pi (k^2 + \lambda)(k^2 + \omega)}
$$
so that $\phi^\omega_\lambda \in H^1 (\R^2)$. If, conversely, $\omega \leq 0$, then $ \widehat \phi^\omega_\lambda \in L^2(\R^2) $ if and only if $ q^{\omega} = 0 $, and thus $ \phi^\omega_\lambda  = 0 $, which implies that $ u^{\omega} $ cannot be a standing wave.
\end{proof}

Exploiting the previous result, we can now prove Theorem \ref{teo-stand}.

\begin{proof}[Proof of Theorem \ref{teo-stand}]
Since $\omega > 0$, one can choose $\lambda = \omega$ in \eqref{standec}, so that \eqref{standeq} gives
\begin{equation} \label{standeqpos}
0 \ = \ 
\langle \nabla \chi_\omega, \nabla \phi^\omega_\omega \rangle + \omega 
\langle \chi_\omega,  \phi^\omega_\omega \rangle + \theta_\omega \lf(\lf|q^\omega\ri|\ri)  q_\chi^* q^\omega.
\end{equation}
First, choose $q_\chi = 0$, so that $\chi = \chi_\omega$. Thus
$$ 0 \ = \ \langle \chi, ( - \Delta + \omega ) \phi_\omega^\omega \rangle, $$
for all $\chi \in H^1 (\R^2)$, and hence $\phi^\omega_\omega = 0$.

On the other hand, choose $\chi = G_\omega$. As a consequence 
$\chi_\omega = 0$ and $q_\chi = 1$, which entails, from \eqref{standeqpos}, that
\beq
	\label{condition sw}
\theta_\omega \lf(\lf|q^\omega\ri|\ri)  q^\omega = 0.
\eeq
Then either $q^\omega = 0$ or $\theta_\omega \lf(\lf|q^\omega\ri|\ri) = 0$. In the first case, 
we have $u^\omega = 0$, so it is not a standing wave.
In the second case, $ \theta_\omega \lf(\lf|q^\omega\ri|\ri) = 0$ implies that $ \lf| q^{\omega} \ri| $ equals the r.h.s. of \eqref{quomega}. In addition, such a quantity must be positive, which implies the condition $\omega > 4 e^{-2 \gamma}$ and then \eqref{uomega}.

Finally, by direct computation,
$$ E (\psi^\omega_t) \ = \ - \f {|q^{\omega}|^2}{4 \pi} + \f {\sigma \beta}{\sigma + 1} |q^{\omega}|^{2 \sigma + 2}, $$
thus, minimizing in $|q^{\omega}| \in (0, + \infty)$, one gets \eqref{infenergy}.
\end{proof}

\begin{rem}[Parametrization of standing waves]
	\mbox{}	\\
	Notice that by the condition \eqref{condition sw} there is a one-to-one correspondence between the absolute value of the charge, i.e. $|q^\omega| \in (0, + \infty)$, and the frequency $\omega \in (4e^{-2 \gamma}, + \infty)$ of any standing wave. Therefore, the standing waves can be equivalently parametrized by $\omega$ or by $|q|$ (where the dependence on $ \omega $ can be dropped).
\end{rem}


\section{Moment of Inertia}
\label{sec:prel} 

As we mentioned in the Introduction, the main technical point of the paper is the analysis of the moment of inertia $ \M(t) $ associated with the solution $\psi_t$, defined by \eqref{eq:inertia}.

Before presenting the formulae for the time derivatives of $ \M(t) $, let us stress some basic facts, which will be useful in the following. First, we recall that, using the Fourier transform  in \eqref{eq:ansatz}, one gets
\begin{equation}
 \label{eq:psi_stand}
 \psitr_t(\kv)=e^{-ik^2t}\phitr_{\lambda,0}(\kv)+\frac{e^{-ik^2\tau} q(0)}{2\pi(k^2+\lambda)}+\frac{i}{2\pi}\int_0^t\dtau\:e^{-ik^2(t-\tau)}q(\tau),
\end{equation}
and then, integrating by parts as in \cite[Eqs. (2.4)-(2.6)]{ADFT2} one finds
the decomposition
 \begin{equation}
  \label{eq:psi_tr}
  \psitr_t(\kv)=e^{-ik^2t}\phitr_{\lambda,0}(\kv)+\frac{q(t)}{2\pi(k^2+\lambda)}+\ftr_{1,\lambda}(t,\kv)+\ftr_{2,\lambda}(t,\kv),
 \end{equation}
 where $\ftr_{1,\lambda}$ and $\ftr_{2,\lambda}$ are given by
 \begin{gather*}
   \ftr_{1,\lambda}(t,\kv):=\frac{i\lambda}{2\pi(k^2+\lambda)}\int_0^t\dtau\:e^{-ik^2(t-\tau)}q(\tau),\\[.2cm]
   \ftr_{2,\lambda}(t,\kv):=\frac{-\lambda}{2\pi(k^2+\lambda)}\int_0^t\dtau\:e^{-ik^2(t-\tau)}\dot{q}(\tau).
 \end{gather*}
 Note that the well-definition of the last integral is a straightforward consequence of Proposition \ref{pro:reg_q}, where we will prove absolute continuity of the charge. Furthermore, by the same computations leading to \cite[Eq. (2.54)]{CCT} and following, one has
that $f_{1, \lambda}(t)$ and $f_{2, \lambda} (t)$ belong to $H^1 (\R^2)$ and that
\begin{equation}
\label{effeunodue}
\lf\| f_{j, \lambda} \ri\|_{L^\infty ((0,T), H^1 (\R^2))} \leq C, \qquad j = 1,2, \ \forall T \in (0, T_*),
\end{equation}
where the constant $C$ depends (possibly) on $T$.

On the other hand, by direct computation,
\[
  \nabk\ftr_{1,\lambda}(t,\kv)=\frac{-i\lambda\kv}{\pi(k^2+\lambda)^2}\int_0^t\dtau\:e^{-ik^2(t-\tau)}q(\tau)+\frac{\lambda\kv}{\pi(k^2+\lambda)}\int_0^t\dtau\:e^{-ik^2(t-\tau)}(t-\tau)q(\tau),
 \]
 and
 \begin{equation}
  \label{eq:gradfdue}
  \nabk\ftr_{2,\lambda}(t,\kv)=\frac{\kv}{\pi(k^2+\lambda)^2}\int_0^t\dtau\:e^{-ik^2(t-\tau)}\dot{q}(\tau)+\frac{i\kv}{\pi(k^2+\lambda)}\int_0^t\dtau\:e^{-ik^2(t-\tau)}(t-\tau)\dot q(\tau),
 \end{equation}
and, since the action of $\nabk$ is essentially a multiplication by $k$, \eqref{effeunodue} translates into 
\begin{equation}
\label{gradeffeunodue}
\lf\| \nabk f_{j, \lambda}  \ri\|_{L^\infty ((0,T), L^2 (\R^2))} \leq C, \qquad j = 1,2, \ \forall T \in (0, T_*),
\end{equation}
and
\begin{equation}
\label{kgradeffeunodue}
\lf\| {\bf k} \cdot \nabk f_{j, \lambda} \ri\|_{L^\infty ((0,T), H^{-1} (\R^2))} \leq C, \qquad j = 1,2, \ \forall T \in (0, T_*).
\end{equation}

\medskip
In view of the previous remarks, we can prove that the moment of inertia defined by \eqref{eq:inertia} is finite for all $t \in [0, T_*)$.

\begin{lem}\label{lem-weldef}
\mbox{}	\\
 Let $\sigma\geq1/2$ and let $\psi_0\in\DES$ (with $\DES$ defined by \eqref{eq:schwarz_domain}). Then, $ \M \in L^\infty(0,T)$ for any $T<T_*$.
\end{lem}

\begin{proof}
 From \eqref{eq:psi_tr},
 \begin{equation}
  \label{eq:gradpsi}
  \nabk\psitr_t(\kv)=-2it\kv e^{-ik^2t}\phitr_{\lambda,0}(\kv)+e^{-ik^2t}\nabk\phitr_{\lambda,0}(\kv)-\frac{q(t)\kv}{\pi(k^2+\lambda)^2}+\nabk\ftr_{1,\lambda}(t,\kv)+\nabk\ftr_{2,\lambda}(t,\kv).
 \end{equation}
 Since
\[
  \M(t)=\int_{\R^2}\dkv\:|\nabk\psitr_t(\kv)|^2,
 \]
one has to prove that the $L^2$-norm of the terms in \eqref{eq:gradpsi} is bounded in $[0,T]$, for every fixed $T<T_*$. For the first three terms this is immediate,
 while for the last two terms it follows from \eqref{gradeffeunodue}.
\end{proof}


\subsection{Derivatives of the moment of inertia}

Now, we can present the main technical results of the paper, that is the formulae for the first and the second derivative of the moment of inertia. In this section, we only mention the statements of the results and show some heuristic computations, in order to give an intuitive idea of the reasons for which one should expect these formulae. The rigorous proofs are postponed to Section \ref{app-der}. Notice that the results presented below hold true also in the defocusing case, i.e., if $ \beta < 0 $.

\begin{pro}[First derivative of $ \M $]
 \label{pro:first_der}
 \mbox{}	\\
 Let the assumptions of Lemma \ref{lem-weldef} be satisfied. Then, $ \M \in AC[0,T]$ for any $ T < T_* $ and 
\beq
	\label{derivative M}
  \dot{\M}(t)=4\:\PI\bigg\{\int_{\R^2}\dkv\:\psitr_t(\kv)\:\kv\cdot\nabk \psitrs(\kv)\bigg\},\qquad\text{for a.e.} \: t\in[0,T_*).
 \eeq
\end{pro}

Formula \eqref{derivative M} is quite classical in the theory of blow-up also for standard NLS equations, and goes under the name called virial identity. Its formal derivation goes as follows: neglecting any regularity issues
 \[
\dot\M(t)=\frac{\diff}{\diff t}\int_{\R^2}\dxv\:|\xv|^2\left|\psi_t(\xv)\right|^2=\frac{\diff}{\diff t}\int_{\R^2}\dkv\:|\nabk\psitr_t(\kv)|^2=2\PR\bigg\{\int_{\R^2}\dkv\:\partial_{t}\left(\nabk\psitr_t(\kv)\right)\nabk\psitr_t^{*}(\kv)\bigg\}.
\]
Now, since by \eqref{eq:gradpsi},
\beq
	\label{eq: cazzo}
\partial_{t}\left(\nabk\psitr_t(\kv)\right)=-2i\kv\psitr_t(\kv)-i |\kv|^{2}\nabk\psitr_t(\kv),
\eeq
then one gets
\begin{multline*}
\dot\M(t)=2\PR\bigg\{\int_{\R^2}\dkv\:\left(-2i\kv\psitr_t(\kv)\nabk\psitr_t(\kv)^{*}-i |\kv|^{2}|\nabk\psitr_t(\kv)|^{2}\right)\bigg\}\\[.2cm]=4\PI\bigg\{\int_{\R^2}\dkv\:\kv\psitr_t(\kv)\nabk\psitr_t^{*}(\kv)\bigg\}.
\end{multline*}

\medskip
On the other hand, exploiting \eqref{derivative M}, it is possible to establish the following formula, which is the central point of Glassey method.

\begin{pro}[Second derivative of $ \M $]
 \label{pro:second_der}
 \mbox{}	\\
 Let the assumptions of Lemma \ref{lem-weldef} be satisfied. Then, $ \M(t) \in C^2[0,T] $ for any $ T < T_* $ and 
 \begin{equation}
  \label{eq:second_der}
  \ddot{\M}(t)=8E(\psi_0)+2\left(\frac{1}{\pi}-\frac{4\beta\sigma}{\sigma+1}|q(t)|^{2\sigma}\right)|q(t)|^2,\qquad\forall\,t\in[0,T_*).
 \end{equation}
\end{pro}

Proposition \ref{pro:second_der} has an immediate consequence, which is the main tool used in the proof of Theorem \ref{teo:sufficient}.

\begin{cor}[Threshold $ \Lambda $ and $ \ddot \M $]\label{cor-entails}
\mbox{}	\\
Under the assumptions of Lemma \ref{lem-weldef},
\begin{equation} \label{entails}
\ddot \M (t) \leq 8 \lf(E (\psi_0) - \Lambda \ri),\qquad\forall\, t\in[0,T_*),
\end{equation}
where $\Lambda$ is defined by \eqref{eq:blow_cond}.
\end{cor}

\begin{proof}
Notice that, given a solution $\psi_t$ and denoted by $u^{\omega_t}$ the unique positive
standing wave whose charge equals $|q(t)|$, the identity \eqref{eq:second_der} rewrites as
\[
\ddot \M (t) = 8 \lf( E (\psi_0) - E (u^{\omega_t}) \ri).
\]
Hence, minimizing $E (u^{\omega_t})$ in $|q(t)|$, as in the proof of Theorem \ref{teo-stand}, \eqref{entails} follows.
\end{proof}

\begin{rem}[Concavity of $ \M $ and critical exponent]
	\mbox{}	\\
 In \eqref{eq:second_der} one can see the technical reason for which in the 2D case the problem of the blow-up does not present any critical exponent for the nonlinearity, unlike in the 1D and 3D cases. Indeed, in view of \eqref{eq:second_der}, in order to impose the uniform concavity of $\M$ the exponent $\sigma$ plays no significant role. In other words, for any $\sigma$ ($\geq1/2$) there exists a sufficient condition for the blow-up. On the contrary, in the 1 or 3D cases the second derivative of the moment of inertia reads \cite{ADFT1,AT}
 \[
  \ddot{\M}(t)=8E(\psi_0)-4\beta\frac{\sigma-1}{\sigma+1}|q(t)|^{2\sigma+2}
 \]
and thus the role of the exponent $\sigma=1$ is apparent. In addition, in those cases it is possible to prove that when $\sigma<1$ the solution is global.
\end{rem}

As for the first derivative, let us show some heuristic derivation of \eqref{eq:second_der}: we assume here for simplicity that $ \psi_t $ is a strong solution of the Cauchy problem \eqref{eq:NLSEweak}, i.e. at any time $ t \in \R $, $ \psi_t $ belongs to the domain of the nonlinear operator appearing on the r.h.s. of \eqref{eq:NLSEweak}. This simply implies \cite{CCF} that $ \psi_t $ admits the usual decomposition $ \psi_t = \phi_{\lambda,t} + q(t) G_{\lambda} $, with $ \phi_{\lambda,t} \in H^2(\R^2) $ and $ q(t) \in \C $ satisfying the {\it boundary condition}
\beq
	\label{eq: bc}
	\phi_{\lambda,t}(\mathbf{0}) = \theta_{\lambda}(|q(t)|) q(t).
\eeq
Under this assumption, using \eqref{eq-dec}, with $\lambda=1$ for the sake of simplicity, \eqref{eq: cazzo} and Divergence Theorem and differentiating \eqref{eq:psi_smooth} (see also \eqref{eq:Ie_aux}), we get
\bml{\label{fII0}
\ddot{\M}(t)=4\:\PI\bigg\{\int_{\R^2}\dkv\:\lf[\partial_t\psitr_t(\kv)+ik^2\psitr_t(\kv)\ri]\kv\cdot\nabk\psitrs(\kv)\bigg\}+8\int_{\R^2}\dkv\:\,k^2\big|\psitr_t(\kv)\big|^2\\
=\frac{2}{\pi}\:\PR\left\{q(t)\int_{\R^2}\dkv\:\kv\cdot\nabk \psitrs(\kv)\right\}+8\int_{\R^2}\dkv\:\,k^2\big|\psitr_t(\kv)\big|^2 \\
= \frac{2}{\pi}\:\PR\left\{q(t)\int_{\R^2}\dkv\:\kv\cdot\nabk \phitr^*_{\lambda,t}(\kv)\right\}+8\int_{\R^2}\dkv\:\,k^2 \lf[ \big|\phitr_{\lambda,t}(\kv) \big|^2 + \frac{1}{\pi} \PR  \bigg\{\frac{q(t)\phitr^*_{\lambda,t}(\kv)}{k^2 + \lambda} \bigg\}\ri] \\
= 8 \lf( \PR \big\{ q(t) \phi_{\lambda,t}^*(\mathbf{0}) \big\} + \lf\| \nabla \phi_{\lambda,t} \ri\|_2^2 - 2 \lambda \PR \big\{\langle\phi_{\lambda,t},q(t) G_{\lambda}\rangle\big\} \ri),
}
where we neglected again any regularity issue. Plugging in the above expression the value of $ \phi_{\lambda,t}(\mathbf{0}) $ given by the condition \eqref{eq: bc} and recalling the expression \eqref{eq:energy} of the energy $ E(\psi_t) = E(\psi_0) $, \eqref{eq:second_der} is recovered.


\section{Proofs of Propositions \ref{pro:first_der} and \ref{pro:second_der}}
\label{app}

This section is completely devoted to the rigorous proof of Propositions \ref{pro:first_der} and \ref{pro:second_der}, which make then rigorous the formal computations presented before.

\subsection{Extra-regularity of the charge}
\label{app-charge}

The first step to prove \eqref{derivative M} and \eqref{eq:second_der} is that of establishing some further regularity for the charge $q(\cdot)$, with respect to the one obtained by \cite{CCT} (namely, $H^{1/2}(0,T)\cap C[0,T]$\footnote{Actually, in \cite[Lemma 2.6]{CCT} is proved the \emph{log-H\"older continuity} of the charge, but it is not sufficient as well.}), possibly exploiting the more restrictive assumptions on the smoothness of initial data.

As we will see in the following, the required property is the absolute continuity on closed and bounded intervals. The proof of such regularity follows exactly the strategy developed by \cite{CCT} in order to prove $H^{1/2}$-regularity (precisely, \cite[Proposition 2.3 \& 2.4]{CCT}). As a consequence, we discuss here only new technical aspects, referring to \cite{CCT} for those results which do not require significant modifications.

The first step is to establish Lipschitz continuity of the map $f\mapsto|f|^{2\sigma}f$ in $W^{1,1}(0,T)$ (which is the analogue of \cite[Lemma 2.1]{CCT}).

\begin{lem}\label{lem-lip}
\mbox{}	\\
 Let $\sigma\geq\frac{1}{2}$ and $T,\,M > 0$. Assume also that $f$ and $g$ are functions satisfying
 \begin{equation}
  \label{eq-lipass}
  \left\|f\right\|_{L^\infty(0,T)}+ \left\|f\right\|_{W^{1,1}(0,T)} \leq M, \qquad\left\|g\right\|_{L^\infty(0,T)}+ \left\|g\right\|_{W^{1,1}(0,T)}\leq M.
 \end{equation}
 Then, there exists a constant $C>0$ independent of $f,\,g,\,M\mbox{ and }T$, such that
 \begin{equation}
  \label{eq:lip_sob}
  \left\||f|^{2\sigma}f-|g|^{2\sigma}g\right\|_{W^{1,1}(0,T)} \leq C\max\left\{1,T\right\} M^{2\sigma}\left(\left\| f-g \right\|_{L^\infty(0,T)} + \left\| f-g \right\|_{W^{1,1}(0,T)}\right).
 \end{equation}
\end{lem}

\begin{proof}
 Denote by $\varphi:\C\to\C$ the function $\varphi(z)=|z|^{2\sigma}z$, which belongs to $C^2(\R^2;\C)$  as a function of the real and imaginary parts of $ z $ since $\sigma\geq\frac{1}{2}$. Arguing as in \cite[Proof of Lemma 2.1]{CCT}, easy computations yield
 \begin{equation}
  \label{eq-diff}
  \varphi(f(t))-\varphi(g(t))=(f(t)-g(t))\,\phi_1(t)+(f(t)-g(t))^*\,\phi_2(t),
 \end{equation}
 where $\phi_j(t):=\psi_j(f(t),g(t))$, $j=1,2$, and
 \[
  \psi_1(z_1,z_2):=\int_0^1\ds\:\partial_z\varphi(z_1+s(z_2-z_1)),\qquad \psi_2(z_1,z_2):=\int_0^1\ds\:\partial_{z^*}\varphi(z_1+s(z_2-z_1)).
 \]
 Note also that $\psi_j\in C^1(\R^4;\C)$ (now as a function of the real and imaginary parts of $ z_1 $ and $z_2$). As a consequence, one can see that
 \begin{multline*}
  \left\|\varphi(f(t))-\varphi(g(t)) \right\|_{W^{1,1}(0,T)} \leq \left\| \phi_1  \cdot \left( f - g \right) \right\|_{W^{1,1}(0,T)} + \left\| \phi_2  \cdot \left( f - g \right)  \right\|_{W^{1,1}(0,T)}\\[.2cm]
  \leq C \max\left\{\left\| \phi_1  \right\|_{L^{\infty}(0,T)} + \left\| \phi_2  \right\|_{L^{\infty}(0,T)}, \left\| \phi_1  \right\|_{W^{1,1}(0,T)} + \left\| \phi_2  \right\|_{W^{1,1}(0,T)}\right\} \times\\[.2cm]
  \times\left(\left\| f-g \right\|_{L^\infty(0,T)}+\left\| f-g \right\|_{W^{1,1}(0,T)}\right).
 \end{multline*}
 Now, combining \eqref{eq-lipass}, the definition of $\phi_j$ and the regularity assumptions on $\varphi$ (as in \cite[Proof of Lemma 2.1]{CCT}), one finds
 \begin{equation}
  \label{eq-linf}
  \left\|\phi_j\right\|_{L^\infty(0,T)}\leq C M^{2\sigma},\quad j=1,2,
 \end{equation}
 so that it is left to estimate $\left\| \phi_j  \right\|_{W^{1,1}(0,T)}$ (note that $\phi_j\in W^{1,1}(0,T)$ since it is a composition of the absolute continuous functions $f,\,g$ and $\psi_j(\cdot,\cdot)$ which is of class $C^1$). It is immediate that
 \[
  \left\|\phi_j\right\|_{L^1(0,T)}\leq CT M^{2\sigma},\quad j=1,2.
 \]
 On the other hand, from \cite[Eqs. (2.11)-(2.14)]{CCT}, one has that for a.e. $s,t\in[0,T]$
 \[
  \left|\frac{\phi_j(t)-\phi_j(s)}{t-s}\right|\leq C\max\left\{|f(t)|,|f(s)|,|g(t)|,|g(s)|\right\}^{2\sigma-1}\left(\left|\frac{f(t)-f(s)}{t-s}\right|+\left|\frac{g(t)-g(s)}{t-s}\right|\right),
 \]
 which entails
 \[
  \big|\dot{\phi}_j(t)\big|\leq CM^{2\sigma-1}\left(\big|\dot{f}(t)\big|+\big|\dot{g}(t)\big|\right),\qquad\text{for a.e.}\quad t\in[0,T].
 \]
Then,
 \[
  \big\|\dot{\phi}_j\big\|_{L^1(0,T)} \leq CM^{2\sigma-1}\left(\big\|\dot{f}\big\|_{L^1(0,T)}+\big\|\dot{g}\big\|_{L^1(0,T)}\right)\leq CM^{2\sigma}.
 \]
 Summing up, one easily obtains \eqref{eq:lip_sob}.
\end{proof}

The second step is to show that the action of a translated Volterra function preserves, as integral kernel, $W^{1,1}$-regularity. More precisely, we have

\begin{lem}
 \label{lem:contr_further}
 \mbox{}	\\
 Let $T>0$ and $h\in W^{1,1}(0,T)$. Then
 \[
  \h(t):=\int_0^T\dtau\:\I(t+T-\tau)h(\tau)
 \]
 belongs to $W^{1,1}(0,\T)$ for any $\T>0$.
\end{lem}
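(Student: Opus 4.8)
The plan is to move the $t$-derivative off the Volterra kernel and onto $h$, exploiting that $h\in W^{1,1}(0,T)$ is absolutely continuous whereas the only pathology of $\I$ is its singularity at the origin. The sole analytic fact I will use is that the Volterra function is integrable near $0$ (the very integrability already implicit in the convolution appearing in the charge equation \eqref{eq:charge}), so that
\[
J(x):=\int_0^x\ds\:\I(s),\qquad x\geq 0,
\]
is well defined, finite and absolutely continuous, with $J(0)=0$ and $J'=\I$ a.e. Observe that, since $0\leq\tau\leq T$ and $t\geq 0$, the argument $t+T-\tau$ is always nonnegative, so $\I$ is never evaluated outside its domain and the only source of blow-up is the corner $t=0$, $\tau=T$.

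First I would integrate by parts in $\tau$. Using $\partial_\tau[-J(t+T-\tau)]=\I(t+T-\tau)$ together with the absolute continuity of $h$, one obtains the representation
\[
\h(t)=h(0)\,J(t+T)-h(T)\,J(t)+\int_0^T\dtau\:J(t+T-\tau)\,\dot h(\tau).
\]
This already shows that $\h$ is continuous, hence in $L^1(0,\T)$, since $J$ is continuous and the integrand is dominated by $(\sup_{[0,\T+T]}J)\,|\dot h|\in L^1(0,T)$.

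Next I would prove that each term on the right-hand side is absolutely continuous in $t$ on $[0,\T]$. The boundary terms $h(0)J(t+T)$ and $-h(T)J(t)$ are compositions of the absolutely continuous function $J$. For the integral term, writing $J(t+T-\tau)-J(s+T-\tau)=\int_s^t\diff r\:\I(r+T-\tau)$ — valid since $J$ is absolutely continuous — and applying Fubini, which is legitimate because $\int_0^T\dtau\int_s^t\diff r\:\I(r+T-\tau)|\dot h(\tau)|<\infty$, yields
\[
\int_0^T\dtau\:J(t+T-\tau)\dot h(\tau)-\int_0^T\dtau\:J(s+T-\tau)\dot h(\tau)=\int_s^t\diff r\int_0^T\dtau\:\I(r+T-\tau)\dot h(\tau).
\]
Consequently $\h$ is absolutely continuous on $[0,\T]$, with
\[
\dot\h(t)=h(0)\,\I(t+T)-h(T)\,\I(t)+\int_0^T\dtau\:\I(t+T-\tau)\dot h(\tau)\qquad\text{for a.e. }t.
\]

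Finally I would verify $\dot\h\in L^1(0,\T)$ term by term, which is precisely where the integrability of $\I$ at the origin does all the work: $\int_0^\T\dt\:\I(t+T)=J(\T+T)-J(T)<\infty$, $\int_0^\T\dt\:\I(t)=J(\T)<\infty$, and by Tonelli $\int_0^\T\dt\int_0^T\dtau\:\I(t+T-\tau)|\dot h(\tau)|\leq J(\T+T)\,\|\dot h\|_{L^1(0,T)}$, using that $\int_{T-\tau}^{\T+T-\tau}\I\leq J(\T+T)$ uniformly in $\tau\in[0,T]$. Together with $\h\in L^1(0,\T)$ this gives $\h\in W^{1,1}(0,\T)$. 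The only genuine obstacle is the endpoint singularity of $\I$; it is harmless exactly because $\I$ is integrable near $0$, so that both the boundary term $h(T)\I(t)$ and the corner of the convolution integral stay in $L^1$ in $t$ even though they are unbounded pointwise as $t\to 0^+$.
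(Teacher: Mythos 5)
Your proof is correct, and it arrives at exactly the same two ingredients as the paper's proof: the a.e.\ derivative identity
\[
\dot{\h}(t)=\I(t+T)h(0)-\I(t)h(T)+\int_0^T\dtau\:\I(t+T-\tau)\dot h(\tau),
\]
and the Tonelli estimate of the convolution term through the primitive of $\I$ (your $J$ is precisely the paper's $\NN$), which yields the bound $\NN(\T+T)\,\|\dot h\|_{L^1(0,T)}$. Where you differ is in how the identity is justified. The paper changes variables, writing $\h(t)=\int_t^{t+T}\dtau\:\I(\tau)h(t+T-\tau)$, and then differentiates directly — a Leibniz rule with moving endpoints and a $t$-dependent integrand — without spelling out why this is legitimate given the singularity of $\I$ at the origin. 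You instead integrate by parts in $\tau$ first, which trades the singular kernel $\I$ for its absolutely continuous primitive $J$, and only then verify absolute continuity in $t$ of each resulting term, the integral one via the Fubini interchange
\[
\int_0^T\dtau\:\bigl[J(t+T-\tau)-J(s+T-\tau)\bigr]\dot h(\tau)=\int_s^t\diff r\int_0^T\dtau\:\I(r+T-\tau)\dot h(\tau),
\]
before differentiating. What your route buys is a fully rigorous justification of the differentiation step that the paper asserts in one line; the cost is a slightly longer argument. Both proofs ultimately rest on the same facts: $h$ is absolutely continuous, and $\I$ is locally integrable with increasing, absolutely continuous primitive.
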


\begin{proof}
 One can easily see that $\h\in L^1(0,\T)$ for all $\T>0$. On the other hand, observing that
 \[
  \h(t)=\int_t^{t+T}\dtau\:\I(\tau)h(t+T-\tau),
 \]
 there results
 \[
  \dot{h}_T(t)=\I(t+T)h(0)-\I(t)h(T)+\int_0^T\dtau\:\I(t+T-\tau)\dot{h}(\tau).
 \]
 Since the first two terms are in $L^1(0,\T)$ for any $\T>0$, as functions of $t$, we must show that
 \[
  A:=\int_0^{\T}\dt\int_0^T\dtau\:\I(t+T-\tau)\big|\dot{h}(\tau)\big|<+\infty.
 \]
 Using Tonelli theorem and a change of variable, and denoting by $\NN(t):=\int_0^t\dtau\:\I(\tau)$ (which is an increasing and absolutely continuous function, as explained in \cite{CCT,CF}), one finds
 \begin{align*}
  A =  & \, \int_0^T\dtau\:\big|\dot{h}(\tau)\big|\int_{T-\tau}^{\T+T-\tau}\ds\:\I(s)= \, \int_0^T\dtau\:\big|\dot{h}(\tau)\big|\big(\NN(\T+T-\tau)-\NN(T-\tau)\big)\\[.3cm]
  \leq & \, \int_0^T\dtau\:\big|\dot{h}(\tau)\big|\NN(\T+T-\tau)\leq 2\NN(\T+T) \big\|\dot{h}\big\|_{L^1(0,T)}<\infty,
 \end{align*}
 which concludes the proof.
\end{proof}

Finally, we have all the ingredients to prove that the solution $q$ of the charge equation \eqref{eq:charge} belongs to $W^{1,1}(0,T)$ for every $T\in(0,T_*)$, where we recall that $ T_* $ is the maximal existence time. To this aim it is convenient to write \eqref{eq:charge} in the following compact form:
\begin{equation}
 \label{eq:charge_compact}
 q(t) + \int_0^t \dtau \: \bigg(g(t,\tau,q(\tau))+\kappa\I(t-\tau) \: q(\tau)\bigg) = f(t),
\end{equation}
where $\kappa:=-2\big(\log 2-\gamma+i\frac{\pi}{4}\big)$ and $g$ and $f$ are defined respectively by
\[
 g(t,\tau,q(\tau)):=-4\pi\beta\I(t-\tau)|q(\tau)|^{2\sigma}q(\tau),\qquad f(t):=4\pi\int_0^t\dtau\:\I(t-\tau)(U_0(\tau)\psi_0)(\zev).
\]
Moreover, we introduce the notation
\[
 \lf(Ig\ri)(t):=\int_0^t\dtau\:\I(t-\tau)g(\tau),\qquad t\geq0,
\]
and recall that, from\footnote{The result is actually proven there only for real functions but the extension to complex ones is trivial.} \cite[Theorem 5.3]{CF}, if $g\in W^{1,1}(0,T)$, then
\begin{equation}
 \label{eq:contrW}
 \lf\|Ig\ri\|_{W^{1,1}(0,T)}\leq \NN(T) \lf(|g(0)|+\lf\|g\ri\|_{W^{1,1}(0,T)}\ri).
\end{equation}
Note that the following result extends straightforwardly to the defocusing case $ \beta < 0 $.

\begin{pro}[$W^{1,1}$-regularity of $ q $]
 \label{pro:reg_q}
 \mbox{}	\\
 Under the assumptions of Lemma \ref{lem-weldef}, the solution of \eqref{eq:charge_compact} $ q\in W^{1,1}(0,T) $ for any $ T < T_* $.
\end{pro}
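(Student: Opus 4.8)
The plan is to upgrade the continuity of the charge to $W^{1,1}$-regularity by a fixed-point argument in $W^{1,1}$ on a short interval, followed by a finite continuation that uses Lemma \ref{lem:contr_further} to absorb the memory carried by the Volterra kernel. Throughout, I exploit that $q$ is already known to be continuous on $[0,T]$ (from \cite[Lemma 2.6]{CCT}), so that $M:=\|q\|_{L^\infty(0,T)}$ is finite and furnishes the uniform bound required by the Lipschitz estimate \eqref{eq:lip_sob}. As a preliminary, I record the regularity of the source: writing $f=4\pi\, I\big((U_0(\cdot)\psi_0)(\zev)\big)$ and using that $\psi_0\in\DES$ (i.e. $\phi_\lambda\in\SC$) forces $t\mapsto(U_0(t)\psi_0)(\zev)$ to be regular enough to lie in $W^{1,1}(0,T)$, as in \cite[Propositions 2.3 \& 2.4]{CCT}, the bound \eqref{eq:contrW} gives $f\in W^{1,1}(0,T)$.

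I then set up the contraction. Rewriting \eqref{eq:charge_compact} as $q=\Phi(q)$, with
\[
 \Phi(p):=f+4\pi\beta\, I\big(|p|^{2\sigma}p\big)-\kappa\, I p,
\]
I work in the closed ball of radius $R\sim M$ of $\{p\in W^{1,1}(0,\delta)\::\:p(0)=0\}$, on which $\|p\|_{L^\infty(0,\delta)}\le\|p\|_{W^{1,1}(0,\delta)}$. Since $q(0)=f(0)=0$, fixed points indeed satisfy $p(0)=0$, and $|p|^{2\sigma}p$ vanishes at the origin as well. Combining \eqref{eq:contrW} with \eqref{eq:lip_sob}, both the self-mapping property and the contraction constant are controlled by $\NN(\delta)$ times a constant depending only on $M,\beta,\sigma$ and $|\kappa|$. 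As $\NN$ is increasing, absolutely continuous and vanishes at $0$, one has $\NN(\delta)\to0$ as $\delta\to0$, so one may fix $\delta=\delta(M)$ small enough that $\Phi$ is a contraction. Its unique fixed point coincides with $q$ on $[0,\delta]$ by uniqueness of the continuous solution, whence $q\in W^{1,1}(0,\delta)$.

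Finally I propagate the regularity to all of $[0,T]$ by a finite induction. I partition $[0,T]$ into $N=\lceil T/\delta\rceil$ intervals $[t_k,t_{k+1}]$ of length at most $\delta$ and assume $q\in W^{1,1}(0,t_k)$. On $[t_k,t_{k+1}]$ I split each convolution in \eqref{eq:charge_compact} into the history part $\int_0^{t_k}$ and the local part $\int_{t_k}^t$. After a time shift, the history part has exactly the form treated in Lemma \ref{lem:contr_further}, with $h$ equal to $q$, respectively $|q|^{2\sigma}q$, on $[0,t_k]$, both of which belong to $W^{1,1}(0,t_k)$ by the inductive hypothesis and \eqref{eq:lip_sob}; hence it contributes a $W^{1,1}(t_k,t_{k+1})$ term and may be merged into an effective source $F_k\in W^{1,1}(t_k,t_{k+1})$. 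The local part is a Volterra operator of the same type as above, and the increments $p_1-p_2$ of any two candidate fixed points vanish at the left endpoint $t_k$, so the contraction estimate is identical to that of the second paragraph with the same length $\delta$. One concludes $q\in W^{1,1}(t_k,t_{k+1})$, hence $q\in W^{1,1}(0,t_{k+1})$, and after $N$ steps $q\in W^{1,1}(0,T)$.

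The main obstacle is precisely this continuation step: because the kernel $\I(t-\tau)$ encodes the entire past of the solution, local $W^{1,1}$-regularity does not glue into a global statement for free. The two points that make it work are, first, that the accumulated history remains a genuine $W^{1,1}$ source on each new subinterval (the content of Lemma \ref{lem:contr_further}), and second, that the contraction length $\delta$ can be kept uniform, governed only by the finite $L^\infty$ bound $M$ and not by the $W^{1,1}$ norms accumulated on previous subintervals, so that finitely many iterations suffice to cover $[0,T]$.
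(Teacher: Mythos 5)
Your Steps (the $W^{1,1}$-regularity of the source $f$, and the short-time contraction in $W^{1,1}$) match the paper's Steps (i)--(ii), but your continuation step has a genuine gap: the claim that the contraction length $\delta$ can be kept \emph{uniform}, ``governed only by the finite $L^\infty$ bound $M$ and not by the $W^{1,1}$ norms accumulated on previous subintervals,'' does not follow from the tools you invoke. In \eqref{eq:lip_sob} the Lipschitz constant is $C\max\{1,T\}M^{2\sigma}$ where $M$ bounds the \emph{sum} $\|\cdot\|_{L^\infty}+\|\cdot\|_{W^{1,1}}$ of the two arguments, i.e.\ the radius of the ball in which the contraction runs, not the $L^\infty$ norm alone. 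Moreover, for the self-mapping property the ball on $[t_k,t_{k+1}]$ must contain the effective source $F_k$, and by the proof of Lemma \ref{lem:contr_further} the $W^{1,1}(t_k,t_{k+1})$-norm of the history term is of order $\NN\big(|h(0)|+|h(t_k)|+\|\dot h\|_{L^1(0,t_k)}\big)$, where $\|\dot h\|_{L^1(0,t_k)}$ grows with $\|q\|_{W^{1,1}(0,t_k)}$. Hence the ball radius $R_k$ grows with the accumulated norm, the contraction constant behaves like $C_\delta\big(1+R_k^{2\sigma}\big)$, and for fixed $\delta$ it may exceed $1$ after finitely many steps: the iteration may stall before reaching $T$. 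This is precisely the difficulty the paper flags (``a priori this procedure could stop before $T_*$ is reached'') and resolves by a different mechanism: it sets $\widehat{T}:=\sup\{T>0:q\in W^{1,1}(0,T)\}$ and excludes $\widehat{T}<T_*$ by an \emph{a priori absorption estimate} on $[\widehat{T}-\ep,\widehat{T}-\delta]$ with $\delta\to0$, in which the absorbing coefficient $C\NN(\ep)\big(\|q\|_{L^\infty(0,\widehat{T})}^{2\sigma}+1\big)$ depends only on the $L^\infty$ norm, because the derivative of $|q|^{2\sigma}q$ is bounded \emph{pointwise} by $C\|q\|_{L^\infty}^{2\sigma}|\dot q|$. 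Note that such an absorption can only be performed on intervals where the $W^{1,1}$-norm is already known to be finite (one cannot absorb an infinite quantity), which is why the paper works strictly below $\widehat{T}$ and passes to the limit; your induction cannot substitute it directly on $[t_k,t_{k+1}]$, since finiteness there is exactly what is being proved. Your scheme could presumably be repaired by sharpening \eqref{eq:lip_sob} so that the $W^{1,1}$-seminorm of the arguments multiplies only $\|f-g\|_{L^\infty}$, and then contracting in an interval-dependent weighted norm, but none of this is in your write-up.

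A secondary error: you assert $q(0)=f(0)=0$ and work in the subspace $\{p:\,p(0)=0\}$, using $\|p\|_{L^\infty}\leq\|p\|_{W^{1,1}}$. For a datum $\psi_0\in\DES$ with nonzero charge this is false: by the Sonine property of $\I$ against $-\gamma-\log\tau$ (used by the paper to handle $IA_{2,1}$ in Step (i)), the right-hand side of \eqref{eq:charge_compact} tends to $q(0)\neq0$ as $t\to0^+$, so the continuous solution has trace $q(0)$ at the origin and does not belong to your subspace; the embedding inequality fails as well. The paper avoids this by running the contraction in the set $\B_{T}$ with the combined norm $\|\cdot\|_{L^\infty(0,T)}+\|\cdot\|_{W^{1,1}(0,T)}$ and radius calibrated on $f$, which is the correct setting here.
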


\begin{rem}[Initial datum]
\mbox{}	\\
 In fact, an inspection of the proof of Proposition \ref{pro:reg_q} below reveals that it suffices to assume $\phi_\lambda\in H^2(\R^2)$, in place of $\phi_\lambda\in\SC$. In addition, one can see that the assumption $\phi_\lambda\in\SC$ (as well as $\phi_\lambda\in H^2(\R^2)$) simplifies the proofs of \cite[Theorems 1.1, 1.2 \& 1.3]{CCT}. Indeed, in \cite{CCT} a delicate duality pairing argument is used in order to give some meaning to the formal integration of $\dot{q}$. On the contrary, in view of Proposition \ref{pro:reg_q}, a suitable regularity of the initial datum $\psi_0$ entails that all the required integrations of $\dot{q}$ are well defined in the classical Lebesgue sense (which makes all the computations easier).
\end{rem}

\begin{proof}
 We split the proof in three steps. We first prove that the forcing term enjoys the $W^{1,1}$-regularity; then we show that the regularity holds true for $ q $ as well on short intervals, via a contraction argument; finally, by gluing together solutions on different time intervals, we prove that $ q $ has the $W^{1,1}$-regularity up to the maximal existence time.
 
 \emph{Step (i).} The first point consists of proving that $f\in W^{1,1}(0,T)$ for all $T>0$. We start by fixing arbitrarily $T>0$ and recalling that, from the decomposition of the initial datum $\psi_0$,
 \[
   4\pi(U_0(\tau)\psi_0)(\zev)=\underbrace{4\pi \left(U_0(\tau)\phi_{\lambda,0} \right)(\zev)}_{=:A_1(\tau)} + \underbrace{2q(0) \left(U_0(\tau)K_0\left(\sqrt{\lambda}|\cdot|\right) \right)(\zev)}_{=:A_2(\tau)}.
 \]
 Exploiting the Fourier transform and arguing as in \cite[Proof of Proposition 2.3]{CCT} one can see that
 \[
  \|A_1\|_{H^\nu(\R)}^2\leq C\int_{\R^2}\dkv\:\big(1+k^4\big)^\nu\big|\phitr_{\lambda,0}(\kv)\big|^2.
 \]
Hence, as $\phi_{\lambda,0}\in\SC$, $A_1$ belongs to $W^{1,1}(0,T)$, so that, by \eqref{eq:contrW}, $IA_1\in W^{1,1}(0,T)$. On the other hand, from \eqref{eq-fond} and \eqref{eq-q} below,
 \[
  A_2(\tau)=q(0) \underbrace{e^{i\lambda\tau}(-\gamma-\log(\tau))}_{:=A_{2,1}(\tau)}+ q(0) \underbrace{\tfrac{e^{i\lambda\tau}}{\pi}\left(-\pi\log\lambda+Q(\lambda;\tau)\right)}_{:=A_{2,2}(\tau)}.
 \]
 Now, since $Q$ is a smooth function of $\tau$ (see, e.g., \cite{AS} for more details), again by \eqref{eq:contrW}, there results $IA_{2,2}\in W^{1,1}(0,T)$. Finally, using the property that $\I$ is a \emph{Sonine kernel} with complement $(-\gamma-\log(\tau))$, namely that
 \[
  \int_0^t\dtau\:\I(t-\tau)(-\gamma-\log(\tau))=\int_0^t\dtau\:\I(\tau)(-\gamma-\log(t-\tau))=1,\qquad\forall t\geq0,
 \]
 (see \cite[Lemma 32.1]{SKM} and \cite[Eq. (2.29)]{CCT}), we have
 \[
  \lf( IA_{2,1} \ri)(t)=1+\int_0^t\dtau\:\I(t-\tau)a_{2,1}(\tau),\qquad a_{2,1}(\tau) := \left(e^{i\lambda\tau}-1 \right)(-\gamma-\log(\tau)).
 \]
 Since $ a_{2,1} $ is  in $W^{1,1}(0,T)$, this entails that $IA_{2,1}\in W^{1,1}(0,T)$  too, so that $f\in W^{1,1}(0,T)$.

 \emph{Step (ii).}  Here we prove that the map
 \[
  \mathcal{G}(q)[t]:=f(t)-\int_0^t\dtau\:\bigg(g(t,\tau,q(\tau))+\kappa\I(t-\tau)q(\tau)\bigg)
 \]
 is a contraction in a suitable subset of $W^{1,1}(0,T)$, for a sufficiently small $T\in(0,T^*)$, which immediately implies that the unique solution of \eqref{eq:charge_compact} is of class $W^{1,1}$ at least on small intervals.
 
 Consider the set
 \[
  \B_{T} := \left\{q\in W^{1,1}(0,T) \: \Big| \: \left\|q\right\|_{L^\infty(0,T)}+\left\|q\right\|_{W^{1,1}(0,T)}\leq \mathrm{b}_{T} \right\},
 \]
 with $\mathrm{b}_{T}=2\max\{\|f\|_{L^\infty(0,T)}+\|f\|_{W^{1,1}(0,T)},1\}$. It is a complete metric space with the norm
 \[
  \left\| \cdot \right\|_{\B_{T}}= \left\|\cdot \right\|_{L^\infty(0,T)}+ \left\|\cdot \right\|_{W^{1,1}(0,T)}.
 \]
 In order to prove that $ \mathcal{G} $ is a contraction on $ \B_{T} $, we first show that $\mathcal{G}(\B_{T})\subset \B_{T}$. To this aim, split the homogenous part of $\mathcal{G}(q)[t]$ in two terms:
 \[
  \mathcal{G}_1(q)[t]:=\int_0^t\dtau\:g(t,\tau,q(\tau)),\qquad \mathcal{G}_2(q)[t]:=\kappa\int_0^t\dtau\:\I(t-\tau)q(\tau).
 \]
 From \eqref{eq:contrW}, \eqref{eq:lip_sob} and \cite[Eq. (2.5)]{CCT} (i.e., the Lipschitz continuity of the map $f\mapsto|f|^{2\sigma}f$ in $L^\infty(0,T)$), one finds
 \[
  \left\|\mathcal{G}_1(q) \right\|_{W^{1,1}(0,T)} \leq C_T \left\| |q|^{2\sigma}q \right\|_{\B_{T}} \leq C_T \mathrm{b}_{T}^{2\sigma} \left\| q \right\|_{\B_T} \leq C_T  \mathrm{b}_{T}^{2\sigma+1},
 \]
 where, from now on, $C_T$ stands for a generic positive constant such that $C_T \to 0$, as $ T \to 0 $, and which may vary from line to line. In addition, arguing as in \cite[Proof of Proposition 2.3]{CCT}, that is combining \cite[Eq. (2.20)]{CCT} (i.e., the contractive property of the operator $I$ in $L^\infty(0,T)$) and again \cite[Eq. (2.5)]{CCT}, there results
 \[
  \left\|\mathcal{G}_1(q)\right\|_{L^\infty(0,T)} \leq  C_T \mathrm{b}_{T}^{2\sigma}\left\|q \right\|_{L^\infty(0,T)} \leq C_T\mathrm{b}_{T}^{2\sigma+1}
 \]
 and thus
 \[
  \left\|\mathcal{G}_1(q)\right\|_{\B_{T}}\leq C_T\mathrm{b}_{T}^{2\sigma+1}.
 \]
 On the other hand, one can easily find that $ \left\|\mathcal{G}_2(q) \right\|_{\B_T}\leq C_T \left\|q \right\|_{\B_{T}}\leq C_T \mathrm{b}_{T}$, so that
 \[
  \left\|\mathcal{G}(q) \right\|_{\B_{T}} \leq\mathrm{b}_{T} \left[  \frac{1}{2}+ C_T \left(1+\mathrm{b}_{T}^{2\sigma}\right)\right].
 \]
 Consequently, as the term in brackets is equal to $\frac{1}{2}+o(1)$ as $T\to0$, for $T$ sufficiently small, $\mathcal{G}(q)\in \B_{T}$.
 
 Therefore, it is left to prove that $\mathcal{G}$ is actually a contraction. Given two functions $q_1,\,q_2\in \B_{T}$, we have
 \[
  \mathcal{G}(q_1)-\mathcal{G}(q_2)=\mathcal{G}_1(q_1)-\mathcal{G}_1(q_2)+\mathcal{G}_2(q_1-q_2).
 \]
 Arguing as before, one sees that $ \left\|\mathcal{G}_2(q_1-q_2) \right\|_{\B_{T}}\leq C_T\left\|q_1-q_2 \right\|_{\B_{T}}$, while, using once again \eqref{eq:lip_sob}, \cite[Eqs. (2.5)$\&$(2.20)]{CCT} and \eqref{eq:contrW},
 \[
  \left\|I\left(|q_1|^{2\sigma}q_1-|q_2|^{2\sigma}q_2\right)\right\|_{\B_{T}} \leq C_T \left\||q_1|^{2\sigma}q_1-|q_2|^{2\sigma}q_2\right\|_{\B_{T}} \leq C_T \mathrm{b}_{T}^{2\sigma} \left\|q_1-q_2 \right\|_{\B_{T}}.
\]
 Then,
 \[
  	\left\|\mathcal{G}(q_1)-\mathcal{G}(q_2) \right\|_{\B_{T}} \leq C_T \big(1+\mathrm{b}_{T}^{2\sigma}\big) \left\|q_1-q_2 \right\|_{\B_{T}}
 \]
 and, since $C_T\to0$ as $T\to0$ and $\mathrm{b}_{T}$ is bounded, $\mathcal{G}$ is a contraction on $\B_{T}$, provided that $T$ is small enough.
 
 \emph{Step (iii).} Let $q$ be the solution of \eqref{eq:charge}. From Step (ii), there exists $T_1\in(0,T_*)$ such that $q\in W^{1,1}(0,T_1)$. Now, consider the equation
 \begin{equation}
  \label{eq:charge_new}
  q_1(t) + \int_0^t \dtau \: \bigg(g(t,\tau,q_1(\tau))+\kappa\I(t-\tau) \: q_1(\tau)\bigg) = f_1(t),
 \end{equation}
 where
 \[
  f_1(t):=f(t+T_1)+4\pi\beta\int_0^{T_1}\dtau\:\I(t+T_1-\tau)|q(\tau)|^{2\sigma}q(\tau)-\kappa\int_0^{T_1}\dtau\:\I(t+T_1-\tau)q(\tau).
 \]
 Exploiting Lemma \ref{lem:contr_further} with $T=T_1$ and $h=-4\pi\beta|q|^{2\sigma}q+\kappa q$, one can see that $f_1\in W^{1,1}(0,T)$ for every $T<T_*-T_1$ and arguing as before, there exists $T_1'<T_*-T_1$ and $q_1\in W^{1,1}(0,T_1')$ which solves \eqref{eq:charge_new}. In addition, an easy computation shows that $q(t)=q_1(t-T_1)$ for every $t\in[T_1,T_1+T_1']$, so that we have found a solution to the charge equation such that $q\in W^{1,1}(0,T_1)$ and $q\in W^{1,1}(T_1,T_1+T_1')$, whence $q\in W^{1,1}(0,T_1+T_1')$. This shows that once the regularity is proven up to a time $ T_1 \in (0, T_{*}) $, then it can be extended up to $ T_1 < T_1' < T_{*} $. A priori this procedure could stop before $ T_* $ is reached.
 
 Define $\widehat{T}:=\sup\{T>0:q\in W^{1,1}(0,T)\}$, which is strictly positive by Step (ii). In order to conclude, we must prove that $\widehat{T}=T_*$. Assume, then, by contradiction that $\widehat{T}<T_*$. Consequently, $q\in W^{1,1}(0,T)$ for every $T<\widehat{T}$ and $\|q\|_{L^\infty(0,\widehat{T})} < +\infty$. In addition, fix $\ep>0$ such that $\NN(\widehat{T}-T_\ep)\big(\|q\|_{L^\infty(0,\widehat{T})}^{2\sigma}+1\big)<1/2C$, where $T_\ep:=\widehat{T}-\ep$ and $C$ is a fixed constant that will be specified in the following, and $0<\delta<\ep$, so that $T_\delta:=\widehat{T}-\delta\in(T_\ep,\widehat{T})$. At this point we can estimate $\|q\|_{W^{1,1}(T_\ep,T_\delta)}$ by using \eqref{eq:charge_compact}. First we note that (letting $h$ be defined as before) for $t\in(T_\ep,T_\delta)$
 \[
  q(t)=f(t)-\int_0^{T_\ep}\dtau\:\I(t-\tau)h(\tau)-\int_{T_\ep}^t\dtau\:\I(t-\tau)h(\tau).
 \]
 Since $f\in W^{1,1}(0,T)$ for every $T>0$, its $W^{1,1}(T_\ep,T_\delta)$-norm can be easily estimated independently of $\delta$. The same can be proved for the second term, arguing as in the proof of Lemma \ref{lem:contr_further} and noting that $\I(t-\tau)=\I(t'+T_\ep-\tau)$ with $t'\in[0,T_\delta-T_\ep]$. Summing up, 
 \begin{equation}
  \label{eq:auxxxx}
  \left\|q\right\|_{W^{1,1}(T_\ep,T_\delta)}\leq C_{\widehat{T},T_\ep}+\bigg\|\int_{T_\ep}^{(\cdot)}\dtau\:\I(\cdot-\tau)h(\tau)\bigg\|_{W^{1,1}(T_\ep,T_\delta)}
 \end{equation}
 (precisely, $C_{\widehat{T},T_\ep}$ depends only on $\|q\|_{L^\infty(0,\widehat{T})}$ and $\|q\|_{W^{1,1}(0,T_\ep)}$, which are finite quantities). Therefore, we have to estimate the last term on the r.h.s.. Since the $L^1$ norm can be easily estimated independently of $\delta$, it suffices to study the contribution of the derivative term.
 To this aim we
 note that, for every $t\in(T_\epsilon,T_\delta)$,
 \[
  \frac{\diff}{\diff t}\int_{T_\ep}^t\dtau\:\I(t-\tau)h(\tau)=\I(t-T_\ep)h(T_\ep)+\int_0^{t-T_\ep}\ds \: \I(s)\dot{h}(t-s).
 \]
 First, one has
 \[
  \left|\int_{T_\ep}^{T_\delta}\dt\:\I(t-T_\ep)h(T_{\ep})\right| \leq\NN(\widehat{T}-T_\ep)\|h\|_{L^\infty(0,\widehat{T})}.
 \]
 On the other hand, using Fubini theorem and some changes of variable,
 \[
  \left|\int_{T_\ep}^{T_\delta}\dt\int_0^{t-T_\ep}\ds\I(s)\dot{h}(t-s)\right|\leq \NN(\widehat{T}-T_\ep)\|\dot{h}\|_{L^1(T_\ep,T_\delta)}.
 \]
 Now, easy computations show that
 \[
  \|h\|_{L^\infty(0,\widehat{T})}\leq C\big(\|q\|_{L^\infty(0,\widehat{T})}^{2\sigma}+1\big)\|q\|_{L^\infty(0,\widehat{T})}
 \]
 (see, e.g., \cite[Proof of Proposition 2.4]{CCT}), while, using \cite[Eq. (2.9)]{CCT} and arguing as in the proof of Lemma \ref{lem-lip} (namely, combining \eqref{eq-diff} and \eqref{eq-linf}),
 \[
  \big\|\dot{h}\big\|_{L^{1}(T_\ep,T_\delta)}\leq C \lf(\|q\|_{L^\infty(0,\widehat{T})}^{2\sigma}+1 \ri) \lf\|q\ri\|_{W^{1,1}(T_\ep,T_\delta)}.
 \] 
 Then, recalling \eqref{eq:auxxxx} and the definition of $\ep$ (and possibly redefining $C_{\widehat{T},T_\ep}$), we conclude that
 \[
  \left\|q\right\|_{W^{1,1}(T_\ep,T_\delta)}\leq C_{\widehat{T},T_\ep}+C\NN(T-T_\ep)\big(\|q\|_{L^\infty(0,\widehat{T})}^{2\sigma}+1\big)\left\|q\right\|_{W^{1,1}(T_\ep,T_\delta)}\leq C_{\widehat{T},T_\ep}+\tfrac{1}{2} \left\|q\right\|_{W^{1,1}(T_\ep,T_\delta)}.
 \]
 Hence, moving the last term to the l.h.s., we see that $\left\|q\right\|_{W^{1,1}(T_\ep,T_\delta)}$ can be estimated independently of $\delta$ and thus, letting $\delta \to 0$, there results  $\left\|q\right\|_{W^{1,1}(T_\ep,\widehat{T})}<\infty$. Summing up, we have that $q\in W^{1,1}(0,\widehat{T})$, but, using the first part of Step (iii) with $T_1=\widehat{T}$, this entails that there exists the possibility of a contraction argument beyond $\widehat{T}$, which contradicts the definition of $\widehat{T}$. Hence, we proved that $\widehat{T}=T_*$.
\end{proof}



\subsection{First and Second Derivative of $ \M $}
\label{app-der}

Now, we have all the ingredients to rigorously prove \eqref{derivative M} and \eqref{eq:second_der}.

It is convenient (for the sake of simplicity) to sketch the line of the proof of Proposition \ref{pro:first_der} in advance. First, we introduce the truncated moment of inertia, i.e.
 \begin{equation}
  \label{eq:mom_approx}
  \M_R(t):=\int_{\palla}\dkv\:|\nabk\psitr_t(\kv)|^2, \qquad\forall t\in[0,T_*).
 \end{equation}
Then, we prove that $\M_R$ is differentiable in $[0,T^*)$ (and absolutely continuous in $[0,T]$, $T<T_*$), so that
$$ \M_R (t) = \M_R (0) + \int_0^t \ds \: \dot \M_R (s), \qquad \forall t \in (0,T_*),$$
and, by monotone convergence theorem,
$$ \M (t) = \M (0) + \lim_{R \to \infty}
\int_0^t \ds \: \dot \M_R (s), \qquad \forall t \in (0,T_*).$$
We conclude the proof by applying the dominated convergence theorem, that is proving that
$$
\M(t) = \M(0) + \int_0^t \lim_{R \to \infty} \ds \: \dot \M_R (s),
$$
which implies, therefore, that
\[
\dot\M(t) \ = \ \lim_{R \to \infty} \dot \M_R (t). 
\]
 
\begin{proof}[Proof of Proposition \ref{pro:first_der}]
Let us divide the proof in three steps.

 
 \emph{Step (i).} We start by proving the analogue of \eqref{derivative M} for the truncated moment of inertia $ \M_R $ defined in \eqref{eq:mom_approx}, i.e.
\begin{equation}
  \label{eq:Ie_der}
  \dot{\M}_R(t)=4\:\PI\left\{\int_{\palla}\dkv\:\psitr_t(\kv)\:\kv\cdot\nabk\psitrs(\kv)\right\},\qquad\forall t\in[0,T_*).
 \end{equation}
 
 Preliminarily, integrating by parts in \eqref{eq:psi_stand}, one obtains
 \begin{equation}
  \label{eq:psi_smooth}
  \psitr_t(\kv)=e^{-ik^2t}\phitr_{\lambda,0}(\kv)+\frac{q(0)\,e^{-ik^2t}}{2\pi(k^2+\lambda)}+\frac{i\Q(t)}{2\pi}+\frac{k^2}{2\pi}\int_0^t\dtau\:e^{-ik^2(t-\tau)}\Q(\tau),
 \end{equation}
 with
 \begin{equation}
  \label{eq:q_int}
  \Q(\tau):=\int_0^\tau\ds\:q(s).
 \end{equation}
Hence, 
 \bml{
  \label{eq:psi_grad_smooth}
  \nabk\psitr_t(\kv)=  \, -2it\kv e^{-ik^2t}\phitr_{\lambda,0}(\kv)+e^{-ik^2t}\nabk\phitr_{\lambda,0}(\kv)-\frac{iq(0)t\kv e^{-ik^2t}}{\pi(k^2+\lambda)}-\frac{q(0)\kv e^{-ik^2t}}{\pi(k^2+\lambda)^2} \\
                      +\frac{\kv}{\pi}\int_0^t\dtau\:e^{-ik^2(t-\tau)}\Q(\tau)-\frac{ik^2\kv}{\pi}\int_0^t\dtau\:e^{-ik^2(t-\tau)}(t-\tau)\Q(\tau)	\\
                   =:  \: \Phi_1(t,\kv)+\Phi_2(t,\kv)+\Phi_3(t,\kv)+\Phi_4(t,\kv)+\Phi_5(t,\kv)+\Phi_6(t,\kv)
}
 with $\Q$ defined by \eqref{eq:q_int}, so that one gets
 \[
  \partial_t\big(\nabk\psitr_t(\kv)\big)=-2i\:\kv\:\psitr_t(\kv)-ik^2\:\nabk\psitr_t(\kv),\qquad\forall t\in[0,T_*),\quad\forall\kv\in\R^2.
 \]
Hence, the identity $\partial_t \left(|\nabk\psitr_t(\kv)|^2\right)=2\:\PR\left\{\nabk\psitr_t(\kv)\cdot\partial_t\nabk\psitrs(\kv)\right\}$
yields
 \[
  \partial_t \left(|\nabk\psitr_t(\kv)|^2\right)=4\:\PI\left\{\psitr_t(\kv)\:\kv\cdot\nabk\psitrs(\kv)\right\},\qquad\forall t\in[0,T_*),\quad\forall\kv\in\R^2.
 \]
 In order to bound the difference quotient in $t$ of the integrand of \eqref{eq:mom_approx},  
we use the trivial estimates
 \begin{equation}
  \label{eq:exp}
  \big|e^{-ik^2(t+h)}-e^{-ik^2t}\big|\leq hk^2,
 \end{equation}
 \begin{equation}
  \label{eq:xexp}
  \big|(t+h)e^{-ik^2(t+h)}-te^{-ik^2t}\big|\leq h(h+t)k^2+h,
 \end{equation}
which entail
\beqn
  	\tx\frac{1}{h} \left|\Phi_1(t+h,\kv)-\Phi_1(t,\kv) \right|&\leq& C (k^3 + 1) \big|\phitr_{\lambda,0}(\kv)\big|,	\nonumber \\ 
   \tx\frac{1}{h} \left|\Phi_2(t+h,\kv)-\Phi_2(t,\kv) \right|&\leq& k^2 \big|\nabk\phitr_{\lambda,0}(\kv)\big|,\nonumber\\ 
   \tx\frac{1}{h} \left|\Phi_3(t+h,\kv)-\Phi_3(t,\kv) \right|&\leq& C (k + 1) \lf\|q\ri\|_{L^\infty(0,t)},\nonumber\\ 
   \tx\frac{1}{h} \left|\Phi_4(t+h,\kv)-\Phi_4(t,\kv) \right|&\leq& C \lf\|q\ri\|_{L^\infty(0,t)},\nonumber\\ 
   \tx\frac{1}{h} \left|\Phi_5(t+h,\kv)-\Phi_5(t,\kv) \right|&\leq& C (k^3 + 1) \lf\|q\ri\|_{L^\infty(0,t)},\nonumber\\ 
   \tx\frac{1}{h} \left|\Phi_6(t+h,\kv)-\Phi_6(t,\kv) \right|&\leq& C (k^5 + 1) \lf\|q\ri\|_{L^\infty(0,t)},\nonumber
\eeqn
where each finite constant $ C $ might depend on $ t, h $ and $ \lambda $.
 Summing up,
 \beq
  \label{eq:quo_psi}
  \frac{1}{h} \big|\nabk\psitr_{t+h}(\kv)-\nabk\psitr_t(\kv)\big|\leq  C \lf[ (k^3 +1)\big|\phitr_{\lambda,0}(\kv)\big| + k^2\big|\nabk\phitr_{\lambda,0}(\kv)\big|+ (k^5 + 1) \lf\|q\ri\|_{L^\infty(0,t)} \ri]
\eeq
 and, since
 \[
  \frac{1}{h} \left|\big|\nabk\psitr_{t+h}(\kv) \big|^2- \big|\nabk\psitr_t(\kv)\ri|^2\big| \leq \frac{1}{h} \big| \nabk\psitr_{t+h}(\kv)-\nabk\psitr_t(\kv) \big| \left(\big|\nabk\psitr_{t+h}(\kv)\big| + \big|\nabk\psitr_t(\kv)\big|\right),
 \]
 combining \eqref{eq:quo_psi} with the fact that $\psi_0\in\DES$, one finds that the difference quotient of $|\nabk\psitr_t(\kv)|^2$ is estimated by a function which is integrable in $\palla$, and thus, by dominated convergence, one obtains \eqref{eq:Ie_der}.

\medskip

{\em Step (ii).} We now prove that  
\begin{equation} \label{thesis2}
\lim_{R\to \infty} \int_0^t \ds \: \dot \M_R (s)  \ = \ 
\int_0^t \ds \: \lim_{R \to \infty} \dot \M_R (s).
\end{equation}
More precisely, we find a constant $K > 0 $, possibly depending on $T < T_*$ but independent of $R$, such that $| \dot \M_R (s) | \leq K $ for all $s \in (0,T)$ and then \eqref{thesis2} follows by dominated convergence. 

To this aim, we consider the integrand as the product of the scalar functions $\widehat \psi_s$ and $\kv \cdot \nabk \psitr_s^*$. For the first factor,
we exploit the structure of the energy space and the well-posedness result in \cite{CCT}, that ensures that the solution $\psi_s$ belongs to $V$ at any time $s$, so that it splits as
\begin{equation}
\label{decomposition} 
\psi_s \ = \ \phi_{\lambda,s} + q(s) G_\lambda
\end{equation}
with $\phi_{\lambda,s} \in H^1 (\R^2)$.
For the second factor, by \eqref{eq:gradfdue}, we get
 \begin{multline}
  \label{kgradpsi}
  \kv \cdot \nabk\psitr_s(\kv)=-2isk^2 e^{-ik^2s}\phitr_{\lambda,0}(\kv)+e^{-ik^2s}\kv \cdot \nabk\phitr_{\lambda,0}(\kv) \\
  -\frac{q(s) k^2}{\pi(k^2+\lambda)^2}+ \kv \cdot \nabk\ftr_{1,\lambda}(s,\kv)+\kv \cdot \nabk\ftr_{2,\lambda}(s,\kv).
 \end{multline}
 
We first notice that the pairing of $\widehat \phi_{\lambda,s}$ with $\kv \cdot \nabk \widehat \psi_s^*$ 
is bounded, as the second factor belongs to 
$H^{-1} (\R^2)$, due to \eqref{kgradeffeunodue}. It is then possible to estimate
\[
\left| \int_\palla \diff\kv \, \widehat \phi_{\lambda,s} (\kv) \,
\kv \cdot \nabk \widehat \psi_s^* (\kv) \right|
\ \leq \ \lf\| \phi_{\lambda,\cdot} \ri\|_{L^\infty ((0,T), H^1 (\R^2))} \big\| \kv \cdot \nabk \widehat \psi_\cdot \big\|_{L^\infty((0,T),H^{-1} (\R^2))},
\]
 where the first factor is finite due to  conservation of the energy. The pairing of the charge term $q(s) G_\lambda$ in \eqref{decomposition} with the term $q(s) k^2 / \pi (k^2 + \lambda)^2$ can be understood as a hermitian product in $L^2 (\R^2)$, thus
$$
\left| \int_\palla \diff \kv \, |q(s)|^2 \, \f{k^ 2 \widehat G_\lambda (\kv)}{(k^2 + \lambda)^2} \right| 
\ \leq \ C \lf\| q \ri\|_{L^\infty (0,T)}^2.
$$
It remains to discuss 
the pairing of $\f{q(s)}{k^2 + \lambda}$ with $\kv \cdot \nabk \widehat f_{j, \lambda}^* (s)$. Let us consider $j=2$ only, which gives the most singular term. By \eqref{eq:gradfdue}
 \begin{equation}
  \label{kgradfdue}
  \kv \cdot \nabk\ftr_{2,\lambda}(s,\kv)=\frac{k^2}{\pi(k^2+\lambda)^2}\int_0^s\dtau\:e^{-ik^2(s-\tau)}\dot{q}(\tau)+\frac{ik^2}{\pi(k^2+\lambda)}\int_0^s\dtau\:e^{-ik^2(s-\tau)}(s-\tau)\dot q(\tau).
\end{equation}
Owing to the fact that $\dot q$ belongs to $L^1 (0,T)$, one immediately has that the first term in \eqref{kgradfdue} is square integrable, so we are left to discuss the second one only. To this aim,
we must estimate the integral
$$
\int_\palla \diff\kv \, \f {k^2} {
(k^2 + \lambda)^3} \int_0^s\dtau\:e^{-ik^2(s-\tau)}(s-\tau)\dot q(\tau) $$
by a constant independent of $R$. Using Fubini's theorem and then introducing the variable $u = k^2$, the previous integral reads
\begin{multline*}
\int_0^s \dtau \, (s - \tau) \dot q (\tau)
\int_0^{R^2} \diff u \, \f {u} {(u+ \lambda)^2}
e^{- iu (s - \tau)} \
 = \ 
i \int_0^s \dtau \, \dot q (\tau) \int_0^{R^2} \diff u \, \f{u}{(u + \lambda)^2} \f{\diff}{\diff u} e^{-iu (s - \tau)} \\ 
=  i \int_0^s \dtau \, \dot q (\tau) \left\{  \frac{R^2 e^{- i R^2 (s - \tau)}} { (R^2 + \lambda)^2} 
- \int_0^{R^2} \diff u \, \frac{\lambda - u}{(\lambda + u)^3}
e^{-i u (s - \tau)} 
\right\} \ \leq \ C \lf\| \dot q \ri\|_{L^1 (0,T)}.
\end{multline*}
We can then conclude that every term involved in the integral \eqref{thesis2} can be estimated by a constant, so that \eqref{thesis2} is proved.

{\em Step (iii).} By showing that for any finite $ t > 0  $
\[
\lim_{R\to \infty} \int_0^t \ds \: \dot \M_R (s)   = 4 {\rm{Im}}  \int_0^t \ds \: \int_{\R^2} \diff\kv \, \widehat \psi_s (\kv) \kv 	\cdot \nabla_{\kv} \widehat \psi_s^* (\kv),
\]
we complete the proof of the result. From \eqref{eq:Ie_der}, one has to show that 
$$ \lim_{R \to \infty} \int_{\palla} \diff \kv \:  \widehat \psi_s (\kv) \kv 	\cdot \nabla_{\kv} \widehat \psi_s^* (\kv)$$
exists. 
As in Step (ii), we decompose the integrand into the terms induced by formulae \eqref{decomposition} and \eqref{kgradpsi}, for the two factors $\widehat \psi_s (\kv)$ and
$\kv \cdot \nabla_\kv \widehat \psi_s^*(\kv)$, respectively.

Now, we first observe that
$$ \int_{\R^2} \diff \kv
\big| \widehat \phi_{\lambda,s} (\kv) \, \kv \cdot \nabla_\kv \widehat \psi_s^* (\kv) \big|
\ \leq \ \lf\| \phi_{\lambda,s} \ri\|_{H^1 (\R^2)} \big\| \kv \cdot \nabla_\kv \widehat \psi_s \big\|_{H^{-1} (\R^2)}
$$ 
so that, by monotone convergence, one can conclude that
$$
\lim_{R \to \infty} \int_{\palla} \diff\kv \:  \widehat \phi_{\lambda, s} (\kv) \kv 	\cdot \nabla_{\kv} \widehat \psi_s^* (\kv) \ = \ \int_{\R^2} \diff\kv \:  \phi_{\lambda,s} (\kv) 
\kv 	\cdot \nabla_{\kv} \widehat \psi_s^* (\kv).
$$
Analogously, since
$$
\int_\palla \diff\kv \: \frac{k^2 \widehat G_\lambda (\kv)}{\pi (k^2 + \lambda)^2} \ \leq \ 
\lf\| G_\lambda \ri\|_{L^2 (\R^2)} \left\| \frac{k^2}{\pi (k^2 + \lambda)^2} \right\|_{L^2 (\R^2)},
$$
again by monotone convergence, one gets
$$
\lim_{R \to \infty} | q(s) |^2 \int_\palla \diff\kv \: \frac{k^2 \widehat G_\lambda (\kv)}
{\pi (k^2 + \lambda)^2} \ = \ | q(s) |^2 \int_{\R^2} \diff \kv \: \frac{k^2 \widehat G_\lambda (\kv)}
{\pi (k^2 + \lambda)^2}.
$$
We are thus left to discuss the two terms
$$ \int_\palla \diff\kv \:
\frac{q^*(s)}{k^2 + \lambda} \kv \cdot \nabla_\kv \widehat f_{j, \lambda} (\kv), \qquad j = 1,2.
$$
Like in Step  (ii), we limit ourselves to the term with $j=2$, that is the most singular. From Step (ii), we know that
\begin{multline*}
\lim_{R \to \infty}
\int_\palla \diff\kv \, \f {k^2} {
(k^2 + \lambda)^3} \int_0^s\dtau\:e^{-ik^2(s-\tau)}(s-\tau)\dot q(\tau) \\
= i \int_0^s \dtau \, \dot q (\tau) \left\{  \frac{R^2 e^{- i R^2 (s - \tau)}} { (R^2 + \lambda)^2} 
- \int_0^{R^2} \diff u \, \frac{\lambda - u}{(\lambda + u)^3}
e^{-i u (s - \tau)} 
\right\}
\end{multline*}
and it is immediately seen that the quantity in brackets can be estimated by a constant, so that, by dominated convergence, the limit exists and, by definition of improper integral, one finally has
\bmln{
	\lim_{R \to \infty}
\int_\palla \diff\kv \: \f {k^2} {
(k^2 + \lambda)^3} \int_0^s\dtau\:e^{-ik^2(s-\tau)}(s-\tau)\dot q(\tau)
\\ = 
\int_{\R^2} \diff\kv \: \f {k^2} {
(k^2 + \lambda)^3} \int_0^s\dtau\:e^{-ik^2(s-\tau)}(s-\tau)\dot q(\tau)
}
and this concludes the proof.
\end{proof}

\begin{rem}[Derivative of $ \M $ at $ t = 0 $]
	\label{rem:t=0}
	\mbox{}	\\
	Along the lines of the proof below, one can also show that the derivative of $ \M $ at $ t = 0 $ is in fact well defined and
	\beq\label{eq:M at 0}
		\dot\M(0) = 4\:\PI\bigg\{\int_{\R^2}\dkv\:\psitr_0(\kv)\:\kv\cdot\nabk \widehat{\psi}^*_0(\kv)\bigg\}.
	\eeq
	Indeed, recalling that the regular part of the initial datum $ \psi_0 $ is a Schwartz function, we can easily exchange the limit $ R \to \infty $ with the integral in the expression of $ \dot\M_R(0) $: the latter can be computed using the identity
	\[
		\nabk\psitr_0(\kv)= \nabk\phitr_{\lambda,0}(\kv)-\frac{q(0)\kv}{\pi(k^2+\lambda)^2},
	\]
	which leads to (note the vanishing of a term because of the imaginary part)
	\[
		\dot\M_R(0) = 4\:\PI\bigg\{\int_{k \leq R}\dkv\: \lf( \widehat{\phi}_{\lambda,0}(\kv) + \frac{q(0)}{2\pi(k^2+\lambda)} \ri) \kv\cdot\nabk\widehat{\phi}_{\lambda,0}^*(\kv) -  \frac{q(0)^* k^2 \widehat{\phi}_{\lambda,0}(\kv)}{\pi(k^2 + \lambda)^2} \bigg\},
	\]
	and all the terms are uniformly bounded in $ R $ thanks to the smoothness and decay of $ \phi_{\lambda,0} $, which allows to take the limit $ R \to \infty $ and recover \eqref{eq:M at 0}.	
	\end{rem}



Before showing the proof of \eqref{eq:second_der}, it is necessary to recall a property of compactly supported functions of bounded variation in dimension one.

\begin{lem}
 \label{bv}
 	\mbox{}	\\
 		Let $ q \in C[0,T] $. Then, if $q \one_{[0,T]}\in BV(\R)$ for any $ T < T_* $, one has
 		\beq
 			\label{eq:bv}
 			\bigg|\int_0^t\dtau\:e^{i\rho\tau}q(\tau)\bigg|\leq \frac{C_T}{\rho}, \qquad\forall t \in[0,T],
		\eeq
		for $\rho$ large.
\end{lem}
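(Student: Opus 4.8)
The plan is to extract the $R^{-2}$ decay by a single integration by parts, transferring the derivative from the oscillating exponential onto $q$. The gain comes from the observation that $\tfrac{\diff}{\diff\tau}e^{iR^2\tau}=iR^2 e^{iR^2\tau}$, so writing $e^{iR^2\tau}=\tfrac{1}{iR^2}\tfrac{\diff}{\diff\tau}e^{iR^2\tau}$ and integrating by parts produces exactly one factor $R^{-2}$ in front of quantities that are controlled uniformly in $R$ by the bounded-variation assumption.

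First I would fix $t\in[0,T]$ and use that $q\one_{[0,T]}\in BV(\R)$ to regard the distributional derivative of $q$ as a finite complex Borel measure $\diff q$ supported in $[0,T]$, whose total mass is the total variation $V:=\mathrm{Var}_{[0,T]}(q)<+\infty$. Then the Riemann--Stieltjes integration by parts gives
\[
\int_0^t\dtau\:e^{iR^2\tau}q(\tau)=\frac{1}{iR^2}\Big[e^{iR^2\tau}q(\tau)\Big]_0^t-\frac{1}{iR^2}\int_0^t e^{iR^2\tau}\,\diff q(\tau).
\]
The boundary term is bounded in modulus by $\tfrac{2}{R^2}\|q\|_{L^\infty(0,T)}$, while the remaining integral is bounded by $\tfrac{1}{R^2}\int_0^t|\diff q|\leq \tfrac{V}{R^2}$. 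Collecting the two contributions yields \eqref{eq:bv} with $C_T:=2\|q\|_{L^\infty(0,T)}+V$, a constant depending on $T$ but neither on $t$ nor on $R$, as required.

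The only slightly delicate point, and the one I would treat with care, is the validity of the integration-by-parts identity when $q$ is merely continuous and of bounded variation rather than $C^1$. This is, however, the standard Riemann--Stieltjes formula for a continuous integrand tested against a $BV$ function: continuity of $q$ guarantees that the endpoint contributions are unambiguous and that the Stieltjes integral against $\diff q$ is well defined. As an alternative route that sidesteps the issue altogether, in the regime where the lemma is actually invoked one has $q\in W^{1,1}(0,T)$ by Proposition \ref{pro:reg_q}, so that $\diff q=\dot q\,\dtau$ and the bulk term is simply controlled by $\tfrac{1}{R^2}\|\dot q\|_{L^1(0,T)}$; this again gives \eqref{eq:bv}, now with $C_T=2\|q\|_{L^\infty(0,T)}+\|\dot q\|_{L^1(0,T)}$.
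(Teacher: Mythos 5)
Your proof is correct, but it follows a genuinely different route from the paper's. The paper works with the duality (distributional) definition of $BV(\R)$: it sets $f_t = q\one_{[0,t]}$, constructs an $R$-dependent smooth cutoff $\phi_{t,R}$ so that $e^{iR^2\tau}\phi_{t,R}(\tau)$ becomes an admissible test function of sup norm at most one, writes $iR^2 e^{iR^2\tau}\phi_{t,R} = \tfrac{\diff}{\dtau}\lf(e^{iR^2\tau}\phi_{t,R}\ri) - e^{iR^2\tau}\dot\phi_{t,R}$, and then controls three separate error terms ($I_1$ via the BV duality bound, $I_2$ via $\|f_t\|_{L^1}\|\dot\phi_t\|_{C^0}$, and $I_3$ which vanishes for large $R$). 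You instead invoke the classical Riemann--Stieltjes integration-by-parts formula, valid here because $q$ is continuous and of bounded variation while $e^{iR^2\tau}$ is $C^1$; this collapses the whole argument into one identity and produces the explicit constant $C_T = 2\|q\|_{L^\infty(0,T)} + V$, uniform in $t$ and $R$ as required. What the paper's construction buys is that it never needs to identify the derivative of a $BV$ function with a finite Borel measure nor to invoke Stieltjes theory --- it runs entirely on the test-function definition of $BV(\R)$, at the cost of the cutoff bookkeeping; what your argument buys is brevity, an explicit constant, and a transparent role for the continuity hypothesis (it rules out ambiguity at the endpoints and in the Stieltjes pairing). One small bookkeeping remark: the derivative measure of $q\one_{[0,T]}$ on $\R$ carries Dirac masses at $0$ and $T$ of size $|q(0)|$ and $|q(T)|$, so when you write $V = \mathrm{Var}_{[0,T]}(q)$ you should make clear you mean the pointwise variation of $q$ on $[0,T]$ (which is dominated by the essential variation of $q\one_{[0,T]}$ on $\R$), not the total mass of the distributional derivative of the truncated function; this does not affect the estimate. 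Your closing observation is also well taken: in the only place the lemma is used, Proposition \ref{pro:reg_q} already gives $q\in W^{1,1}(0,T)$, so $\diff q = \dot q\dtau$ and the bound reduces to $\tfrac{1}{R^2}\lf(2\|q\|_{L^\infty(0,T)}+\|\dot q\|_{L^1(0,T)}\ri)$ --- consistent with the paper's own remark that the $BV$ hypothesis is immediate from the $W^{1,1}$ regularity.
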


\begin{proof}
The result is quite classical, but we show the proof for the sake of completeness.
First, note that \eqref{eq:bv} can be rewritten as
 \begin{equation}
  \label{eq:extra-hp_bis}
  \bigg|\widehat{f}_t(-\rho):=\int_\R\dtau\:e^{-i(-\rho)\tau}f_t(\tau)\bigg|\leq \frac{C_T}{\rho}\qquad\forall t\in[0,T],
 \end{equation}
 where $ f_t:=q \one_{[0,t]}\in BV(\R)$ and is compactly supported. Consider, then, a function $ \phi_t\in C_0^\infty(\R)$ such that $0\leq\phi_t\leq1$, $\phi_t\equiv1$ on $[0,t/2]$ and $\mathrm{supp}\{\phi_t\}=[-1,t+1]$. Subsequently, define $ \phi_{t,\rho}  \in C_0^\infty(\R)$ as
 \[
  \phi_{t,\rho}(\tau):=
  \begin{cases}
   \displaystyle \phi_t(\tau),               	& \text{if } \tau \leq t/2,\\ 
   \displaystyle 1,                          		& \text{if }t/2< \tau \leq t_\rho,	\\
   \displaystyle \phi_t(\tau+t/2-t_\rho), & \text{if } \tau>t_\rho,
  \end{cases}
 \]
 where $t_\rho=(t+\rho)/2$. Now,
 \begin{multline*}
  F(\rho,t):=i\rho\int_\R\dtau\:e^{i\rho\tau}f_t(\tau)=i\rho\int_\R\dtau\:e^{i\rho\tau}f_t(\tau)\phi_{t,\rho}(\tau) \\
  +i\rho\int_\R\dtau\:e^{i\rho\tau}f_t(\tau) \lf(1-\phi_{t,\rho}(\tau)\ri)=\underbrace{\int_\R\dtau\:f_t(\tau)\frac{\diff}{\dtau}\big(e^{i \rho \tau}\phi_{t,\rho}(\tau))}_{=:I_1(\rho,t)} \\
  -\underbrace{\int_\R\dtau\:f_t(\tau)\lf( e^{i\rho\tau}\dot{\phi}_{t,\rho}(\tau) \ri)}_{=:I_2(\rho,t)}+\underbrace{i\rho\int_\R\dtau\:e^{i\rho\tau}f_t(\tau)(1-\phi_{t,\rho}(\tau))}_{=:I_3(\rho,t)}.
 \end{multline*}
 One easily sees that
 \[
  |I_3(\rho,t)|\leq \lf\|f_t\ri\|_{L^\infty(0,t)} \rho\int_0^t\dtau\:\lf|1-\phi_{t,\rho}(\tau) \ri| \xrightarrow[\rho \to \infty]{} 0
 \]
 and that
 \[
  |I_2(\rho,t)|\leq \lf\|f_t\ri\|_{L^1(0,t)} \big\|\dot{\phi}_t\big\|_{C^0(\R)}.
 \]
 On the other hand, as $e^{i \rho \tau}\phi_{t,\rho} \in C_0^\infty(\R)$ with sup norm smaller than or equal to one, by the definition bounded variation, one obtains that $ |I_1(\rho,t)|\leq C_t$. Since the procedure above does not depend on the choice of $ t\in[0,T]$ one sees that \eqref{eq:extra-hp_bis} is satisfied.
 \end{proof}
 
 Finally, we can present the proof of Proposition \ref{pro:second_der}.

\begin{proof}[Proof of Proposition \ref{pro:second_der}]
Analogously to the proof of Proposition \ref{pro:first_der}, we first verify the identity on the truncated moment of inertia $ \M_R(t) $ (recall its definition \eqref{eq:mom_approx}) and then we show that the cut-off can be removed. Note that the fact that $ \M \in C^2[0,T] $ follows from the continuity of the r.h.s. of \eqref{eq:second_der}, once the identity is proven.
 
 \emph{Step (i).} First, we compute the partial derivative w.r.t. time of the integrand on the r.h.s. of \eqref{eq:Ie_der}. Setting
 \[
  B(t,\kv):=\PI\left\{4\,\psitr_t(\kv)\:\kv\cdot\nabk\psitrs(\kv)\right\},
 \]
 we have
 \[
  \partial_t B(t,\kv)=4\,\PI\bigg\{\underbrace{\lf[\partial_t\psitr_t(\kv)+ik^2\psitr_t(\kv)\ri]}_{=:A(t)}\kv\cdot\nabk\psitrs(\kv) \bigg\}+8\,k^2\big|\psitr_t(\kv)\big|^2.
 \]
 Therefore, (let $\lambda=1$ throughout) differentiating \eqref{eq:psi_smooth} with respect to time, one sees that $A(t)=\frac{iq(t)}{2\pi}$, so that
 \begin{equation}
  \label{eq:Ie_aux}
  \partial_t B(t,\kv)=\frac{2}{\pi}\PR\left\{q(t)\:\kv\cdot\nabk\psitrs(\kv)\right\}+8\,k^2\big|\psitr_t(\kv) \big|^2.
 \end{equation}
 Since
 \bdm
 	\dot{\M}_R(t)=\int_{\palla}\dkv\:B(t,\kv),
\edm
 it is just left to prove that dominated convergence applies. First, one easily sees that
 \bmln{
   \displaystyle D:= \frac{1}{h}\big|\psitr_{t+h}(\kv)\:\kv\cdot\nabk\psitr_{t+h}^*(\kv)-\psitr_t(\kv)\:\kv\cdot\nabk\psitrs(\kv)\big| \\
     \leq \underbrace{k \big|\nabk\psitr_{t+h}(\kv) \big|}_{=:A_1(t,h)} \: \underbrace{\frac{1}{h} \big|\psitr_{t+h}(\kv)-\psitr_t(\kv)\big|}_{=:A_2(t,h)}+\underbrace{k \big|\psitr_t(\kv) \big|}_{=:A_3(t)} \: \underbrace{\frac{1}{h}\big|\nabk\psitr_{t+h}(\kv)-\nabk\psitr_t(\kv) \big|}_{=:A_4(t,h)}.
}
 Arguing as in the proof of Proposition \ref{pro:first_der} and using \eqref{eq:psi_smooth}, \eqref{eq:psi_grad_smooth}, \eqref{eq:exp} and \eqref{eq:xexp}, one obtains
 \beqn
  	A_1(t,h) &\leq & C k^2 \big| \phitr_{1,0}(\kv) \big|+ k \big|\nabk\phitr_{1,0}(\kv) \big|+ C(k^4+1) \lf\|q \ri\|_{L^\infty(0,t)},
 \nonumber \\[.2cm]
  A_2(t,h)&\leq & k^2 \big|\phitr_{1,0}(\kv) \big| + C(k^4+1) \lf\|q \ri\|_{L^\infty(0,t)},
\nonumber \\[.2cm]
  A_3(t)&\leq & k \big|\phitr_{1,0}(\kv)\big| + C(k^3+1) \lf\|q \ri\|_{L^\infty(0,t)}	\nonumber
\eeqn
 ($A_4$ is already estimated by \eqref{eq:quo_psi}). Hence, since $\psi_0\in\DES$, $D$ is estimated by a function which is Lebesgue integrable and independent of $h$. Thus dominated convergence applies and, combining with \eqref{eq:Ie_aux}, one has
 \begin{equation}
  \label{eq:Ie_aux1}
  \ddot{\M}_R(t)=\frac{2}{\pi}\PR\left\{q(t)\int_{\palla}\dkv\:\kv\cdot\nabk \psitrs(\kv)\right\}+8\int_{\palla}\dkv\:k^2|\psitr_t(\kv)|^2.
 \end{equation}

 \emph{Step (ii).} Now, it is necessary to find a version of \eqref{eq:Ie_aux1}, which makes easier the passage to the limit as $R\to\infty$. First we see that, from the divergence theorem,
 \[
  \int_{\palla}\dkv\:\kv\cdot\nabk\psitrs(\kv)=\int_{\bpalla}\dH\:k\psitrs(\kv)-2\int_{\palla}\dkv\:\psitrs(\kv).
 \]
 On the other hand,
\bmln{
  8\int_{\palla}\dkv\:k^2|\psitr_t(\kv)|^2 = \, 8\int_{\palla}\dkv\:k^2 \big|\phitr_{1,t}(\kv) \big|^2+\frac{8}{\pi}\PR\left\{q(t)\int_{\palla}\dkv\:\frac{k^2\phitr_{1,t}^*(\kv)}{k^2+1}\right\} \\
                                                        +\frac{2|q(t)|^2}{\pi^2}\int_{\palla}\dkv\:\frac{k^2}{(k^2+1)^2}
}
 and, combining with \eqref{eq:Ie_aux1}, we find that
 \bmln{
  \ddot{\M}_R(t) = \, \frac{2}{\pi}\PR\left\{q(t)\int_{\bpalla}\dH\:k\psitrs(\kv)\right\}-\frac{4}{\pi}\PR\left\{q(t)\int_{\palla}\dkv\:\psitrs(\kv)\right\} \\
                  + 8\int_{\palla}\dkv\:k^2|\phitr_{1,t}^*(\kv)|^2+\frac{8}{\pi}\PR\left\{q(t)\int_{\palla}\dkv\:\frac{k^2\phitr_{1,t}^*(\kv)}{k^2+1}\right\}+\frac{2|q(t)|^2}{\pi^2}\int_{\palla}\dkv\:\frac{k^2}{(k^2+1)^2}.
}
 Furthermore, since $\psi_t\in V$, easy computations yield
 \bml{
  \label{eq:Ie_aux3}
  \ddot{\M}_R(t) =  \, 8(C_R(t)-M_R^2(t))+\frac{4}{\pi}\PR\left\{q(t)\int_{\palla}\dkv\:\psitrs(\kv)\right\}-\frac{2|q(t)|^2}{\pi^2}\log(R^2+1) \\
                  +\frac{2}{\pi}\PR\left\{q(t)\int_{\bpalla}\dH\:k\psitrs(\kv)\right\},
}
 where
 \[
  C_R(t):=\int_{\palla}\dkv\:(1+k^2)\big|\phitr_{1,t}(\kv)\big|^2,	\qquad M_R^2(t):=\int_{\palla}\dkv\: \big|\psitrs(\kv)\big|^2.
 \]
 However, one can see that
 \[
  \int_{\palla}\dkv\:\psitr_t(\kv)=J_R(t)+\frac{q(t)}{2}\log(R^2+1),
 \]
 where
\bmln{
  J_R(t) :=  \, \int_{\palla}\dkv\:e^{-ik^2t}\psitr_0(\kv)-\frac{q(0)}{2\pi}\int_{\palla}\dkv\:\frac{e^{-ik^2t}}{k^2+1} \\
             -\frac{1}{2\pi}\int_{\palla}\dkv\:\frac{1}{k^2+1}\int_0^t\dtau\:e^{-ik^2(t-\tau)}(\dot{q}(\tau)-iq(\tau))	
         =:  \, J_{1,R}(t)+J_{2,R}(t)+J_{3,R}(t) 
	}
 and, consequently, \eqref{eq:Ie_aux3} reads
 \bml{
  \label{eq:Ie_aux4}
  \ddot{\M}_R(t) = 8(C_R(t)-M_R^2(t))+\frac{4}{\pi}\PR\{q^*(t)J_R(t)\} +\frac{2}{\pi}\PR\left\{q(t)\int_{\bpalla}\dH\:k\psitrs(\kv)\right\} \\
                 =: \Phi_{1,R}(t)+\Phi_{2,R}(t)+\Phi_{3,R}(t).
	}
 
 \emph{Step (iii).} To complete the proof, we have to take the limit  $R\to\infty$. First, combining monotone convergence with the facts that $\ddot{M}_R$ is bounded and hence Lebesgue integrable on $[0,t]$ and that $\dot{\M}_R(0)\to\dot{\M}(0)$, there results
 \[
  \M(t)=\M(0)+t\dot{\M}(0)+\lim_{R\to\infty}\int_0^t\ds \int_0^s\dtau\:\ddot{\M}_R(\tau).
 \]
 Notice that at this level we do not need to know that $ \M $ is $ C^1 $ but only the convergence of $ \dot{\M}_R(0) $ (see Remark \ref{rem:t=0}). Furthermore, if there exists a continuous function $g(\tau)$ such that
 \begin{equation}
  \label{eq:pass_lim}
  \lim_{R\to\infty}\int_0^t\ds\int_0^s\dtau\:\ddot{\M}_R(\tau)=\int_0^t\ds\int_0^s\dtau\:g(\tau),
 \end{equation}
 then $\ddot{\M}(t)=g(t)$. Consequently, the goal is to compute the l.h.s. of \eqref{eq:pass_lim}, using the decomposition provided by \eqref{eq:Ie_aux4}.
 
 We immediately see, from monotone convergence, that
 \[
  \lim_{R\to\infty}\int_0^t\ds\int_0^s\dtau\:\Phi_{1,R}(\tau)=\int_0^t\ds\int_0^s\dtau\:\left(\|\phi_{1,\tau}\|_{H^1(\R^2)}^2-M^2(\tau)\right).
 \]
 On the contrary, the computation of
 \[
  \lim_{R\to\infty}\int_0^t\ds\int_0^s\dtau\:\Phi_{2,R}(\tau)
 \]
 requires some further efforts. First, we recall that from \cite[Eq. (2.33)]{CCT} (in view of \cite[Eqs. 3.722.1 \& 3.722.3]{GR}), one has
\begin{equation}
 \label{eq-fond}
 \int_{\R^2}\dkv\:\f{e^{-ik^2t}}{k^2+\lambda}=-\pi e^{i\lambda t}\big[\mathrm{ci}(\lambda t)-i\mathrm{si}(\lambda t)\big]=-\pi e^{i\lambda t}\big(\gamma+\log\lambda+\log t-\tf{1}{\pi}Q(\lambda;t)\big)
\end{equation}
for every $\lambda,\, t>0$, where $\mathrm{si}(\cdot)$ and $\mathrm{ci}(\cdot)$ are the sine and cosine integral functions (defined by \cite[Eqs. 5.2.1 \& 5.2.2]{AS}) and
\begin{equation}
\label{eq-q}
 Q(\lambda;t):=-\pi\bigg(\sum_{n=1}^\infty\f{(-t^2\lambda^2)^n}{2n(2n)!}-i\mathrm{si}(\lambda t)\bigg)
\end{equation}
(see, e.g., \cite[Eq. 5.2.16]{AS}).
Hence, we see that, for every $\tau\in[0,t]$,
 \[
  J_{1,R}(\tau)\underset{R\to\infty}{\longrightarrow}J_{1,\infty}(\tau):=2\pi(U_0(\tau)\psi_0)(\zev)
 \]
 and
 \[
  J_{2,R}(\tau)\underset{R\to\infty}{\longrightarrow}J_{2,\infty}(\tau):=\frac{q(0)e^{i\tau}}{2}\left(\gamma+\log\tau-\tfrac{1}{\pi}Q(1;\tau)\right),
 \]
 by definition. In addition, arguing as in the proof of Proposition \ref{pro:first_der}, one finds that
 \[
  \lim_{R\to\infty}\int_0^t\ds\int_0^s\dtau\:\PR\{q^*(\tau)J_{\ell,R}(\tau)\}=\int_0^t\ds\int_0^s\dtau\:\PR\{q^*(\tau)J_{\ell,\infty}(\tau)\},\qquad\ell=1,2.
 \]
 Concerning the third term, we have to prove that
 \begin{equation}
  \label{eq:Jlimtre}
  \lim_{R\to\infty}\int_0^t\ds\int_0^s\dtau\:\PR\{q^*(\tau)J_{3,R}(\tau)\}=\int_0^t\ds\int_0^s\dtau\:\PR\{q^*(\tau)J_{3,\infty}(\tau)\},
 \end{equation}
 with
 \[
  J_{3,\infty}(\tau):=\frac{1}{2}\int_0^\tau\deta\:e^{i(\tau-\eta)}(\dot{q}(\eta)-iq(\eta))\left(\gamma+\log(\tau-\eta)-\tfrac{1}{\pi}Q(1;\tau-\eta)\right).
 \]
 Preliminarily, we observe that by easy computations
 \bml{
  \label{eq:Jtre}
  J_{3,\infty}(\tau)=   \, -\frac{q(0)e^{i\tau}}{2}(\mathrm{ci}(\tau)-i\mathrm{si}(\tau))  -\frac{i}{2}\int_0^\tau\deta\:q(\eta)e^{i(\tau-\eta)}\big(\gamma+\log(\tau-\eta)-\tfrac{1}{\pi}Q(1;\tau-\eta)\big)	\\
                      +\frac{1}{2}\frac{\diff}{\dtau}\int_0^\tau\deta\:q(\eta)e^{i(\tau-\eta)}\big(\gamma+\log(\tau-\eta)-\tfrac{1}{\pi}Q(1;\tau-\eta)\big).
}
 On the other hand, using integration by parts, Fubini theorem and the definitions of $ \mathrm{ci} $ and $ \mathrm{si} $  \cite[Eqs. 5.2.1 \& 5.2.2]{AS}, we obtain that
 \bml{
  \label{eq:Jtre_aux}
  J_{3,R}(\tau)= \, -\frac{q(0)e^{i\tau}}{2}\big(\mathrm{ci}(\tau)-i\mathrm{si}(\tau)\big)+\frac{q(0)e^{i\tau}}{2}\big(\mathrm{ci}(\tau(R^2+1))-i\mathrm{si}(\tau(R^2+1))\big) \\
                  -\frac{q(\tau)}{2}\log(R^2+1)-\frac{1}{2}\int_0^\tau\deta\:q(\eta)\frac{e^{-iR^2(\tau-\eta)}-1}{\tau-\eta}.
 }
 Now, exploiting \eqref{eq-fond} and \eqref{eq-q}, we deduce that
 \[
  \frac{\diff}{\diff y}Q(1;y)=-\pi\frac{e^{-iy}-1}{y}.
 \]
 Consequently,
 \[
  -\frac{1}{2}\int_0^\tau\deta\:q(\eta)\frac{e^{-iR^2(\tau-\eta)}-1}{\tau-\eta}=\frac{i\pi q(\tau)}{4}+\frac{1}{2}\frac{\diff}{\dtau}\int_0^\tau\deta\:q(\eta)\tfrac{1}{\pi}Q(1;R^2(\tau-\eta))
 \]
 and combining with \eqref{eq:Jtre_aux} and (again) with \eqref{eq-fond} and \eqref{eq-q}, there results
 \bml{
  \label{eq:Jtre_auxbis}
  J_{3,R}(\tau)=  \, \underbrace{-\frac{q(0)e^{i\tau}}{2}\big(\mathrm{ci}(\tau)-i\mathrm{si}(\tau)\big)}_{=:\Gamma_{1,R}(\tau)}+\underbrace{\frac{q(0)e^{i\tau}}{2}\big(\mathrm{ci}(\tau(R^2+1))-i\mathrm{si}(\tau(R^2+1))\big)}_{=:\Gamma_{2,R}(\tau)}+\underbrace{\frac{i\pi q(\tau)}{4}}_{=:\Gamma_{3,R}(\tau)}	\\
                \underbrace{-\frac{q(\tau)}{2}\log\left(\frac{R^2+1}{R^2}\right)}_{=:\Gamma_{4,R}(\tau)}+\underbrace{\frac{1}{2}\frac{\diff}{\dtau}\int_0^\tau\deta\:q(\eta)\big(-\mathrm{ci}(R^2(\tau-\eta))+i\mathrm{si}(R^2(\tau-\eta))\big)}_{=:\Gamma_{5,R}(\tau)}	\\
                 +\underbrace{\frac{1}{2}\frac{\diff}{\dtau}\int_0^\tau\deta\:q(\eta)\big(\gamma+\log(\tau-\eta)\big)}_{=:\Gamma_{6,R}(\tau)}.
 }
 In view of \eqref{eq:Jtre_auxbis}, we can finally compute the limit of the r.h.s. of \eqref{eq:Jlimtre}. As $\Gamma_{1,R}$ does not actually depend on $R$, it remains in the limit as $R\to\infty$, while
 \[
  \lim_{R\to\infty}\int_0^t\ds\int_0^s\dtau\:\PR\{q^*(\tau)(\Gamma_{2,R}(\tau)+\Gamma_{3,R}(\tau)+\Gamma_{4,R}(\tau))\}=0.
 \]
 In addition, only using integration by parts and again the properties of $Q(1;\cdot)$, we see that
 \bmln{
  \Gamma_{6,R}(\tau)= \frac{1}{2}\frac{\diff}{\dtau}\int_0^\tau\deta\:q(\eta)e^{i(\tau-\eta)}\big(\gamma+\log(\tau-\eta)\big)-\frac{i}{2}\int_0^\tau\deta\:q(\eta)e^{i(\tau-\eta)}\big(\gamma+\log(\tau-\eta)\big)	\\
                      +\frac{1}{2\pi}\int_0^\tau\deta\:q(\eta)e^{i(\tau-\eta)} \frac{\diff}{\diff \eta}Q(1;\tau-\eta)	\\
                    = \frac{1}{2}\frac{\diff}{\dtau}\int_0^\tau\deta\:q(\eta)e^{i(\tau-\eta)}\big(\gamma+\log(\tau-\eta)\big)-\frac{i}{2}\int_0^\tau\deta\:q(\eta)e^{i(\tau-\eta)}\big(\gamma+\log(\tau-\eta)\big) \\
                      +\frac{i\pi q(\tau)}{4}-\frac{e^{i\tau}}{2\pi}\frac{\diff}{\dtau}\int_0^\tau\deta\:q(\eta)e^{-i\eta} Q(1;\tau-\eta)	\\
                    = \frac{1}{2}\frac{\diff}{\dtau}\int_0^\tau\deta\:q(\eta)e^{i(\tau-\eta)}\big(\gamma+\log(\tau-\eta)\big)-\frac{i}{2}\int_0^\tau\deta\:q(\eta)e^{i(\tau-\eta)}\big(\gamma+\log(\tau-\eta)\big) \\
                      + \frac{i\pi q(\tau)}{4}-\frac{1}{2\pi}\frac{\diff}{\dtau}\int_0^\tau\deta\:q(\eta)e^{i(\tau-\eta)} Q(1;\tau-\eta)+\frac{ie^{i\tau}}{2\pi}\int_0^\tau\deta\:q(\eta)e^{-i\eta} Q(1;\tau-\eta).
}
 Thus, a comparison with \eqref{eq:Jtre} yields that, if one can show that
 \begin{equation}
  \label{eq:gamma_cinque}
  \lim_{R\to\infty}\int_0^t\ds\int_0^s\dtau\:\PR\{q^*(\tau)\Gamma_{5,R}(\tau)\}=0,
 \end{equation}
 then there results that \eqref{eq:Jlimtre} is proved. Now, from an easy computation we find that
 \bmln{
  \int_0^t\ds\int_0^s\dtau\:q^*(\tau)\Gamma_{5,R}(\tau)=  \int_0^t\ds\:q(s)\int_0^s\dtau\:q^*(\tau)\big(-\mathrm{ci}(R^2(s-\tau))-i\mathrm{si}(R^2(s-\tau))\big) \\
                                                        + \int_0^t\ds\int_0^s\dtau\:\dot{q}(\tau)\underbrace{\int_0^\tau\deta\:q^*(\eta)\big(-\mathrm{ci}(R^2(\tau-\eta))-i\mathrm{si}(R^2(\tau-\eta))\big)}_{=:f_R(\tau)}.
	}
 Since the former term can be immediately proved to converge to zero as $R\to\infty$, we only focus on the latter one. However, exploiting \eqref{eq-fond}, \eqref{eq-q}, \cite[Eqs. 5.2.1 \& 5.2.2]{AS} and \cite[Eqs. 3.722.1 \& 2.722.3]{GR}, one can check that (for $R$ large)
 \[
  \lf|\mathrm{si}(R^2(\tau-\eta))\ri|\leq C,	\qquad \lf|\mathrm{ci}(R^2(\tau-\eta))\ri|\leq C(1+\log(\tau-\eta)),\qquad\forall\eta\in[0,\tau).
 \]
Hence, from a repeated use of dominated convergence there results that $f_R\to0$ pointwise and it can be estimated by a bounded function independent of $R$. Since $\dot{q}$ is integrable by Proposition \ref{pro:reg_q}, this implies that \eqref{eq:gamma_cinque} holds true.
 
 Summing up, we have proved that, setting $J_\infty(\tau):=J_{1,\infty}(\tau)+J_{2,\infty}(\tau)+J_{3,\infty}(\tau)$,
 \[
  \lim_{R\to\infty}\int_0^t\ds\int_0^s\dtau\:\Phi_{2,R}(\tau)=\frac{4}{\pi}\PR\left\{\int_0^t\ds\int_0^s\dtau\:q^*(\tau)J_\infty(\tau)\right\}.
 \]
 Now, recalling \cite[Eq. (2.59)]{CCT},
 \bmln{
  \lf(U_0(t)\psi_0\ri)(\zev) = \left(-\beta|q(t)|^{2\sigma}+\frac{\gamma-\log 2}{2\pi}\right)q(t)-\frac{iq(t)}{8}+\frac{q(0)}{4\pi}(-\gamma-\log t) \\
                         +\frac{1}{4\pi}\int_0^t\dtau\:(-\gamma-\log(t-\tau))\dot{q}(\tau)
 }
and arguing as in \cite[Proof of Theorem 1.2]{CCT} (precisely, as in  Part 2.), long and boring computations show that in fact
 \[
  J_\infty(t)=-2\pi\beta|q(t)|^{2\sigma}q(t)+(\gamma-\log 2)q(t).
 \]
 Consequently,
 \bmln{
  \lim_{R\to\infty}\int_0^t\ds\int_0^s\dtau\:\Phi_{2,R}(\tau)=  -8\int_0^t\ds\int_0^s\dtau\:\left(\frac{\beta}{\sigma+1}|q(\tau)|^{2\sigma}+\frac{\gamma-\log 2}{2\pi}\right)|q(\tau)|^2 \\
                                                                -\frac{8\beta\sigma}{\sigma+1}\int_0^t\ds\int_0^s\dtau\:|q(\tau)|^{2\sigma+2}.
 }
 
 Finally, one has to show that
 \begin{equation}
  \label{eq-final_lim}
  \lim_{R\to\infty}\int_0^t\ds\int_0^s\dtau\:\Phi_{3,R}(\tau)=\frac{2}{\pi}\int_0^t\ds\int_0^s\dtau\:|q(\tau)|^2.
 \end{equation}
 First, observe that from \eqref{eq:psi_stand}
 \[
  \int_{\bpalla}\dH\:k\psitr_\tau(\kv)=\underbrace{Re^{-iR^2\tau}\int_{\partial\palla}\dH\:\phitr_{1,0}(\kv)}_{=:A_{1,R}(\tau)}+\underbrace{\frac{R^2e^{-iR^2\tau}q(0)}{R^2+1}}_{=:A_{2,R}(\tau)}+\underbrace{iR^2\int_0^\tau\deta\:q(\eta)e^{-iR^2(\tau-\eta)}}_{=:A_{3,R}(\tau)},
 \]
 so that
 \[
  \int_0^t\ds\int_0^s\dtau\:\Phi_{3,R}(\tau)=\frac{2}{\pi}\PR\bigg\{\int_0^t\ds\int_0^s\dtau\:q^*(\tau)\big(A_{1,R}(\tau)+A_{2,R}(\tau)+A_{3,R}(\tau)\big)\bigg\}.
 \]
 Now, a simple integration by parts of the term involving $A_{3,R}(\tau)$ yields
 \begin{multline*}
  \int_0^t\ds\int_0^s\dtau\:q^*(\tau)\big(A_{1,R}(\tau)+A_{2,R}(\tau)+A_{3,R}(\tau)\big)\\[.2cm] 
  =\int_0^t\ds\int_0^s\dtau\:|q(\tau)|^2+\underbrace{ R^2\bigg(\int_{k = R}\dH\:\phitr_{1,0}(\kv)\bigg)\int_0^t\ds\int_0^s\dtau\:e^{-iR^2\tau}q^*(\tau)}_{=:B_{1,R}(t)} \\
  +\underbrace{\frac{q(0)}{R^2+1}\int_0^t\ds\int_0^s\dtau\:e^{-iR^2\tau}q^*(\tau)}_{=:B_{2,R}(t)}-\underbrace{\int_0^t\ds\int_0^s\dtau\:q^*(\tau)\int_0^\tau\deta\:e^{-iR^2(\tau-\eta)}\dot{q}(\eta)}_{=:B_{3,R}(t)}.
 \end{multline*}
Hence, if one can prove that $B_{j,R}(t)\to0$, as $R\to\infty$, then \eqref{eq-final_lim} is proved. However, it is easy to see that $B_{1,R}(t),\,B_{2,R}(t)\to0$ (for $B_{2,R}(t)\to0$ one uses that fact that $\phi_{1,0}$ is a Schwartz function), whereas
 \bdm
  B_{3,R}(t)=\int_0^t\dtau\:e^{-iR^2\tau}q^*(\tau)(t-\tau)\int_0^\tau\deta\:e^{iR^2\eta}\dot{q}(\eta),
 \edm
 require some further effort. Nevertheless, if
 \[
  \bigg|\int_0^\tau\deta\:e^{iR^2\eta}\dot{q}(\eta)\bigg|\leq C_t, \qquad\forall\tau\in[0,t],
 \]
 or, equivalently, if
 \begin{equation}
  \label{eq:extra-hp}
  \bigg|\int_0^\tau\deta\:e^{iR^2\eta}q(\eta)\bigg|\leq \frac{C_t}{R^2}, \qquad\forall\tau\in[0,t],
 \end{equation}
 as $R\to\infty$, then $B_{3,R}(t)$ vanishes by Riemann-Lebesgue lemma. Now, as shown in Lemma \ref{bv} (set $\rho=R^2$), a sufficient condition for \eqref{eq:extra-hp} is that $q \one_{[0,T]}\in BV(\R)$ for every $T<T_*$, but  this is immediate since $q\in W^{1,1}(0,T)$ for every $T\in(0,T_*)$, by Proposition \ref{pro:reg_q}.
 
 Therefore, \eqref{eq-final_lim} is true and, summing up,
 \[
  \ddot{\M}(t) = \,8\|\phi_{1,t}\|_{H^1(\R^2)}^2-8M^2(t)+8\left(-\beta|q(t)|^{2\sigma}+\frac{2\gamma-2\log 2+1}{4\pi}\right)|q(t)|^2,
 \]
 so that, exploiting the definition of the energy for $\lambda=1$, suitably rearranging terms and using \eqref{eq:energy}, one finds \eqref{eq:second_der}.
\end{proof}

\begin{rem}
 We highlight that the main technical point in proof of the formula of the second derivative of the moment of inertia is the fact that one cannot use the boundary condition \eqref{eq: bc} in the computations, since it is an open issue whether $\psi_t$ is a strong solution of the problem or not.
\end{rem}


\begin{thebibliography}{99}

\bibitem[AS]{AS}
\textsc{Abramovitz M.,  Stegun I.A.},
\emph{Handbook of Mathematical Functions: with Formulas, Graphs, and Mathematical Tables},
Dover, New York, 1965.

\bibitem[ADFT1]{ADFT1}
\textsc{Adami R., Dell'Antonio G., Figari R., Teta A.},
The Cauchy problem for the Schr\"{o}dinger equation in dimension three with concentrated nonlinearity,
{\it Ann. I. H. Poincar\'{e} Anal. Non Lin\'eaire} {\bf 20} (2003), no. 3, 477-500.

\bibitem[ADFT2]{ADFT2}
\textsc{Adami R., Dell'Antonio G., Figari R., Teta A.},
Blow-up solutions for the Schr\"{o}dinger equation in dimension three with a concentrated nonlinearity,
{\it Ann. I. H. Poincar\'{e} Anal. Non Lin\'eaire} {\bf 21} (2004), no. 1, 121-137.


\bibitem[AT]{AT}
\textsc{Adami R., Teta A.},
A class of nonlinear Schr\"{o}dinger equations with concentrated nonlinearity,
{\it J. Funct. Anal.} {\bf 180} (2001), no. 1, 148-175.

\bibitem[AGH-KH]{Al} 
\textsc{Albeverio S., Gesztesy F., H{\o}egh-Krohn R., Holden H.},
\emph{Solvable Models in Quantum Mechanics},
Texts and Monographs in Physics, Springer-Verlag, New York, 1988.

\bibitem[BV]{banica}
\textsc{Banica V., Visciglia N.},
Scattering for NLS with a delta potential,
{\it J. Differential Equations} {\bf 260} (2016), no. 5, 4410-4439. 

\bibitem[BKB]{bonilla}
\textsc{Bulashenko  O.M., Kochelap  V.A., Bonilla  L.L.},
Coherent patterns and self-induced diffraction of electrons on a thin nonlinear layer,
{\it Phys. Rev. B} {\bf 54} (1996), no. 2, 1537-1540.

\bibitem[CCNP]{CCNP}
\textsc{Cacciapuoti C., Carlone R., Noja D., Posilicano A.},
The 1-D Dirac equation with concentrated nonlinearity,
{\it SIAM J. Math. Anal.} {\bf 49} (2017), no. 3, 2246-2268.

\bibitem[CFNT1]{CFNT1}
\textsc{Cacciapuoti C., Finco D., Noja D., Teta A.},
The NLS Equation in dimension one with spatially concentrated nonlinearities: the pointlike limit,
{\it  Lett. Math. Phys.}, {\bf 104} (2014), 1557-1570.

\bibitem[CFNT2]{CFNT}
\textsc{Cacciapuoti C., Finco D., Noja D., Teta A.},
The point-like limit for a NLS equation with concentrated nonlinearity in dimension three,
{\it J. Funct. Anal.} {\bf 273} (2017), no. 5, 1762-1809.

\bibitem[CCF]{CCF}
\textsc{Carlone R., Correggi M., Figari R.},
Two-dimensional time-dependent point interactions,
{\it Functional Analysis and Operator Theory for Quantum Physics}, European Mathematical Society, London, 2017.

\bibitem[CCT]{CCT}
\textsc{Carlone R., Correggi M., Tentarelli L.},
Well-posedness of the two-dimensional nonlinear Schr\"odinger equation with concentrated nonlinearity,
{\it Ann. Inst. H. Poincar\'e Anal. Non Lin\'eaire} {\bf 36} (2019), no. 1, 257-294.

\bibitem[CFN]{carlone}
\textsc{Carlone R., Figari R., Negulescu C.},
The quantum beating and its numerical simulation,
{\it J. Math. Anal. Appl.} {\bf 450} (2017), no. 2, 1294-1316.

\bibitem[CFinT]{fract}
\textsc{Carlone R., Finco D., Tentarelli L.},
Nonlinear singular perturbations of the fractional Schr\"odinger equation in dimension one, \emph{Nonlinearity} {\bf 32} (2019), no. 8, 3112-3143.

\bibitem[CFioT]{CF}
\textsc{Carlone R., Fiorenza A., Tentarelli L.},
The action of Volterra integral operators with highly singular kernels on H\"older continuous, Lebesgue and Sobolev functions,
{\it J. Funct. Anal.} {\bf 273} (2017), no. 3, 1258-1294.

\bibitem[G]{Glassey}
\textsc{Glassey R.T.},
On the blowing up of solutions to the Cauchy problem for nonlinear Schr\"odinger equations,
{\it J. Math. Phys.} {\bf 18} (1977), no. 9, 1794-1797.

\bibitem[G-CV]{G-CV}
\textsc{G\'omez-Castro D., V\'azquezJ.L.},
The fractional Schr\"odinger equation with singular potential and measure data,
arXiv:1812.02120 [math.AP] (2018).

\bibitem[GR]{GR}
\textsc{Gradshteyn I.S., Ryzhik I.M.},
\emph{Tables of Integrals, Series and Products},
Academic Press, San Diego, 2007.

\bibitem[HL1]{holmer1}
\textsc{Holmer J., Liu C.},
Blow-up for the 1D nonlinear Schr\"odinger equation with point nonlinearity I: Basic theory,
arXiv:1510.03491 [math.AP] (2015).

\bibitem[HL2]{holmer2}
\textsc{Holmer J., Liu C.},
Blow-up for the 1D nonlinear Schr\"odinger equation with point nonlinearity II: Supercritical blow-up profiles,
arXiv:1708.03375 [math.AP] (2017).

\bibitem[IW]{ikeda1}
\textsc{Ikeda M., Wakasugi Y.},
Small-data blow-up of $L^2$-solution for the nonlinear Schr\"odinger equation without gauge invariance,
{\it Differential Integral Equations} {\bf 26} (2013), no. 11-12, 1275-1285. 

\bibitem[II1]{ikeda2}
\textsc{Ikeda M., Inui T.},
Some non-existence results for the semilinear Schr\"odinger equation without gauge invariance, 
{\it J. Math. Anal. Appl.} {\bf 425} (2015), no. 2, 758-773. 

\bibitem[II2]{ikeda3}
\textsc{Ikeda M., Inui T.},
Small data blow-up of $L^2$ or $H^1$-solution for the semilinear Schr\"odinger equation without gauge invariance,
{\it J. Evol. Equ.} {\bf 15} (2015), no. 3, 571-581.

\bibitem[J-LPS]{jona} 
\textsc{Jona Lasinio G., Presilla C., Sj\"ostrand J.},
On Schr\"odinger equations with concentrated nonlinearities,
{\it Ann. Phys.} {\bf 240} (1995), no. 1, 1-21.

\bibitem[MA]{malomed}
\textsc{Malomed B.A., Azbel M.Y.},
Modulational instability of a wave scattered by a nonlinear center,
{\it Phys. Rev. B} {\bf 47} (1993), no. 16, 10402-10406.

\bibitem[MR1]{merle1}
\textsc{Merle F., Raphael P.},
Sharp upper bound on the blow-up rate for the critical nonlinear Schr\"odinger equation,
{\it Geom. Funct. Anal.} {\bf 13} (2003), no. 3, 591-642. 

\bibitem[MR2]{merle2}
\textsc{Merle F., Raphael P.},
The blow-up dynamic and upper bound on the blow-up rate for critical nonlinear Schr\"odinger equation,
{\it Ann. Math.} {\bf 161} (2005), no. 1, 157-222. 

\bibitem[MR3]{merle3}
\textsc{Merle F., Raphael P.},
On a sharp lower bound on the blow-up rate for the $L^2$ critical nonlinear Schr\"odinger equation,
{\it J. Amer. Math. Soc.} {\bf 19} (2006), no. 1, 37-90. 

\bibitem[N]{nier}
\textsc{Nier F.},
The dynamics of some quantum open systems with short-range nonlinearities,
{\it Nonlinearity} {\bf 11} (1998), no. 4, 1127-1172.

\bibitem[OP]{OP}
\textsc{Orsina L., Ponce A.C.},
On the nonexistence of Green's function and failure of the strong maximum principle,
{\it  J. Math. Pures Appl. (9)} (2019), in press.

\bibitem[P]{perelman}
\textsc{Perelman G.},
On the blow up phenomenon for the critical nonlinear Schr\"odinger equation in 1D,
{\it Nonlinear dynamics and renormalization group}, 147-164, CRM Proc. Lecture Notes 27, Amer. Math. Soc., Providence, RI, 2001.

\bibitem[R]{R}
\textsc{Rakotoson J.M.},
Potential capacity and applications,
arXiv:1812.04061 [math.AP] (2018).

\bibitem[SKM]{SKM}
\textsc{Samko S.G., Kilbas A.A., Marichev O.I.},
\emph{Fractional Integrals and Derivatives. Theory and applications},
Gordon and Breach Science Publishers, Yverdon, 1993.

\end{thebibliography}
\end{document}